\newtheorem{theorem}{Theorem}[section]
\newtheorem{lemma}[theorem]{Lemma}
\newtheorem{proposition}{Proposition}[section]
\newtheorem{Corollary}{Corollary}[section]
\theoremstyle{definition}
\newtheorem{definition}[theorem]{Definition}
\newtheorem{example}[theorem]{Example}
\theoremstyle{proposition}
\theoremstyle{Corollary}
\theoremstyle{remark}
\newtheorem{remark}[theorem]{Remark}
\numberwithin{equation}{section}
\begin{document}

\title[   Metallic semi-Riemannian manifold]{Invariant and screen semi-invariant lightlike submanifolds of a metallic semi-Riemannian manifold with a quarter symmetric non-metric connection}

%    Information for first author
\author{Jasleen Kaur}
\address{Department of Mathematics, Punjabi University, Patiala}
\email{jasleen$\_$math@pbi.ac.in}

%    Information for second author
\author{Rajinder Kaur}
%    Address of record for the research reported here
\address{Research Scholar, Department of Mathematics, Punjabi University, Patiala}
\email{rajinderjasar@gmail.com}
%    \thanks will become a 1st page footnote.

%    General info
\subjclass[2020]{53C12, 53C15, 53C40, 53C50}

\begin{abstract}
This research work introduces the structure of invariant  and screen semi-invariant lightlike submanifolds of a metallic semi-Riemannian manifold with a quarter symmetric non-metric connection, elaborated with examples. It delves into the characterization of integrability and parallelism of distributions inherent in the structure of these submanifolds. Additionally, it presents findings pertaining to totally geodesic foliations  for invariant and screen semi-invariant submanifolds.

\end{abstract} 

\maketitle

\section{INTRODUCTION}\label{sec1}

A significant class of manifolds, namely  metallic Riemannian manifolds which have emerged from the metallic numbers of the metallic means family, is an effective domain of study in the realm of differential geometry. The metallic numbers  exhibit important mathematical properties that constitute a bridge between mathematics and design.	The \textquotedblleft{Metallic Means Family}\textquotedblright introduced by Vera W.de Spinadel in \cite{V1} encompasses a range of means, including the golden mean, silver mean, bronze mean, copper mean, and others, which are defined in terms of values of metallic numbers. These means have been extensively examined for their mathematical properties and their applications across various fields of research, as seen in  \cite{V1},\cite{V2},\cite{V3},\cite{V4},\cite{V5},\cite{V6}.The polynomial structures on manifold were introduced by \cite{P1},\cite{P2} and a specific class known as metallic structure was introduced on a Riemannian manifold by \cite{M1}.\\

The lightlike geometry of submanifolds initiated by \cite{B1} being extremely relevant in different branches of mathematics equipped with degenerate metric has led to the development of many interesting and remarkable results in the fields where the non degenerate metric is not applicable. Since the tools used to investigate the geometry of submanifolds in a Riemannian manifold are not favourable in semi-Riemannian cases, so the lightlike (degenerate) geometry  plays a pivotal role in the study of such structures. Within this framework, the invariant submanifolds for golden semi-Riemannnian manifolds were  investigated by \cite{GSRM5} and for metallic semi-Riemannian manifolds with metric connection by \cite{M2}. One more interesting class of submanifolds, namely screen semi-invariant lightlike submanifold was established by \cite{IMP4} for the semi-Riemannian product manifold. \cite{GSRM9} initiated the geometry of screen semi-invariant lightlike submanifold for golden semi-Riemannian manifold which were further worked upon by \cite{GRM DOUBLE STAR}.\\

A quarter symmetric linear connection introduced by \cite{Q2} is defined as: A linear connection $\bar{\nabla}$ on a Riemannian manifold $(\bar{M},\bar{g})$ is said to be a quarter symmetric connection if its torsion tensor $\bar{T}$ satisfies \\
\begin{equation*}
	\bar{T}(X,Y)=\pi(Y)\phi(X)-\pi(X)\phi(Y)
\end{equation*}
where $\phi$ is a $(1,1)$-tensor field and $\pi$ is a 1-form associated with a smooth unit vector $\xi$, called the characteristic vector field, by $\pi(X)=\bar{g}(X,\xi)$. If the linear connection $\bar{\nabla}$ is not a metric connection, then $\bar{\nabla}$ is called a quarter symmetric non-metric connection.  Various researchers \cite{Q3},\cite{Q1},\cite{Q5},\cite{Q6} have developed the geometry of submanifolds for semi-Riemannian manifolds equipped with quarter symmetric non-metric connection. In this context, the screen semi-invariant lightlike submanifolds for product semi-Riemannian manifold were introduced and studied extensively by\cite{Q4}.\\ 

The present paper aims to initiate the geometry of invariant and screen semi-invariant lightlike submanifolds of metallic semi-Riemannian manifolds endowed with quarter symmetric non-metric connection. Some results on the metallic structure have been developed. The integrability and parallelism of distributions have been characterized. The totally geodesic foliations  for invariant and screen semi-invariant submanifolds have also been dealt with. Examples elaborating the structure of the invariant and screen semi-invariant lightlike submanifolds have been presented.

\section{PRELIMINARIES}\label{sec2}

Following \cite{B1}, this section deals with some basic concepts of lightlike geometry  to be used throughout the paper. 

Let ($ \bar{M},\bar g$) be an $(m+n)$-dimensional semi-Riemannian manifold with semi-Riemannian metric $\bar{g}$ and of constant index $q$ such that $m,n\geq 1$, $1\leq q\leq m+n-1$.\\
Let $(M,g)$ be a $m$-dimensional lightlike submanifold of $\bar{M}$. In this case, there exists a smooth distribution $RadTM$ on $M$ of rank $r>0$, known as radical distribution on $M$ such that $Rad TM_p = TM_p \cap TM_p^{\perp}, \forall ~p \in M$ where $TM_p$ and  $TM_p^{\perp}$ are degenerate orthogonal spaces but not complementary. Then $M$ is called an $r$-lightlike submanifold of $\bar{M}$. \\

Now, consider $S(TM)$, known as screen distribution, as  a complementary distribution of radical distribution in  $TM$  i.e.,
\[
TM = Rad TM  \perp S(TM)
\]
and  $S(TM^{\perp})$, called screen transversal vector bundle, as a complementary vector subbundle to $Rad(TM)$ in $TM^{\perp}$ i.e.,
\[
TM^{\perp} = RadTM  \perp S(TM^{\perp})
\]
As $S(TM)$ is non degenerate vector subbundle of $T\bar{M}{\mid}_M$, we have
\[
T\bar{M}{\mid}_M = S(TM) \perp S(TM)^{\perp}
\]
where $S(TM)^{\perp}$  is the complementary orthogonal vector subbundle of $S(TM)$ in $T\bar{M}{\mid}_M$.\\

Let $tr(TM)$ and $ltr(TM)$ be complementary vector bundles to $TM$ in 
$T\bar{M}{\mid}_M$ and to $RadTM$ in $S(TM^{\perp})^{\perp}$ . Then we have
\[
tr(TM) = ltr(TM) \perp S(TM^{\perp})
\]
\[
T\bar{M}{\mid}_M = TM \oplus tr(TM) 
\]
\[ = (RadTM \oplus ltr(TM)) \perp S(TM) \perp S(TM^{\perp}).
\]

\begin{theorem}
	\cite{B1} Let $(M,g,S(TM), S(TM^{\perp}))$ be an $r$-lightlike submanifold of a semi-Riemannian manifold $(\bar{M},\bar{g})$. Then there exists a complementary vector bundle $ltr(TM)$ called a lightlike transversal bundle of $Rad(TM)$ in $S(TM^{\perp})^{\perp}$ and basis of $\Gamma(ltr(TM){\mid}_U)$ consisting of smooth sections $\{N_1,\cdots,N_r\}$ $S(TM^{\perp})^{\perp}{\mid}_U$ such that 
	\[
	\bar{g}(N_i,\xi_j) = \delta_{ij} , \quad \bar{g}(N_i,N_j) = 0, \quad i,j=0,1,\cdots , r
	\]
	where  $\{{\xi_1, \cdots , \xi_r}\}$ is a lightlike basis of $\Gamma(RadTM){\mid}_U$.
\end{theorem}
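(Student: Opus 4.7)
The plan is to construct the sections $N_i$ locally by an ansatz and then verify that the resulting rank-$r$ distribution is well defined and smooth across chart overlaps. Since the splittings $T\bar M|_M = S(TM)\perp S(TM)^\perp$ and $S(TM^\perp)\subset S(TM)^\perp$ with $S(TM^\perp)$ non-degenerate have already been fixed in the preliminaries, I would begin on a coordinate neighborhood $U$ on which $Rad\,TM$ admits the lightlike basis $\{\xi_1,\dots,\xi_r\}$, and pick any (auxiliary) complementary subbundle $H$ to $Rad\,TM$ inside $S(TM^\perp)^\perp$ with local basis $\{V_1,\dots,V_r\}$.

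The first key step is to show that the $r\times r$ matrix $\Lambda_{ij}=\bar g(\xi_i,V_j)$ is non-singular at every point of $U$. I would argue by contradiction: if some non-zero combination $\xi=\sum a_i\xi_i$ were $\bar g$-orthogonal to every $V_j$, then $\xi$ would be orthogonal to $H\oplus Rad\,TM = S(TM^\perp)^\perp$, and since $\xi\in Rad\,TM\subset TM^\perp$ it is already orthogonal to $S(TM^\perp)$; combining these would force $\xi$ to be orthogonal to all of $T\bar M|_M$, contradicting the non-degeneracy of $\bar g$. Hence $\Lambda$ is invertible throughout $U$.

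Next I would look for $N_i$ in the form $N_i=\sum_j\bigl(B_{ij}V_j+A_{ij}\xi_j\bigr)$. Imposing $\bar g(N_i,\xi_j)=\delta_{ij}$ gives the linear system $B\Lambda^{T}=I$, uniquely solvable thanks to Step~1. With $B$ fixed, the requirement $\bar g(N_i,N_j)=0$ becomes a linear condition on the coefficients $A_{ij}$ of the form $A\Lambda^{T}+\Lambda A^{T}=-B\bar g(V,V)B^{T}$, which is solvable (for instance by taking $A$ so that $A\Lambda^{T}$ is minus one half of the right-hand side). This produces smooth null local sections $N_1,\dots,N_r$ with the prescribed pairings; they are automatically linearly independent and transversal to $TM$, so they span a rank-$r$ subbundle $ltr(TM)|_U$ of $S(TM^\perp)^\perp$ complementary to $Rad\,TM$.

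The final, and in my view most delicate, step is the globalization. On an overlap $U\cap U'$ the two local constructions produce different $N_i$'s, but the duality relations $\bar g(N_i,\xi_j)=\delta_{ij}$, $\bar g(N_i,N_j)=0$ pin down the bundle they generate uniquely once $\{\xi_j\}$ is fixed, and a change of basis $\xi_i\mapsto\sum c_{ij}\xi_j$ induces a compatible change $N_i\mapsto\sum(c^{-T})_{ij}N_j$; I would use a partition-of-unity argument (or equivalently, observe that both $B$ and $A$ transform smoothly under change of local frame) to conclude that the family $\{ltr(TM)|_U\}$ glues to a globally defined smooth subbundle $ltr(TM)$ of $S(TM^\perp)^\perp$. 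The principal obstacle I anticipate is precisely this patching verification—ensuring that the normalization imposed locally is compatible with transition functions between frames—whereas the linear-algebra step producing $N_i$ pointwise is, once Step~1 is in hand, essentially forced.
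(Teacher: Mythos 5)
The paper itself gives no proof of this statement: it is quoted verbatim from Duggal--Bejancu \cite{B1}, so your argument can only be measured against the standard proof there. Your Steps 1--3 are essentially that proof. In particular, establishing the nonsingularity of $\Lambda_{ij}=\bar g(\xi_i,V_j)$ by observing that a radical vector orthogonal to $H$ would be orthogonal to $S(TM)\perp S(TM^{\perp})\perp(Rad\,TM\oplus H)=T\bar M|_M$ is exactly the argument in \cite{B1}, and the ansatz $N_i=\sum_j(B_{ij}V_j+A_{ij}\xi_j)$ with $B\Lambda^{T}=I$ and a symmetric choice of the remaining coefficients is the construction given there. (A minor bookkeeping slip: once $B\Lambda^{T}=I$ is imposed, the cross terms in $\bar g(N_i,N_j)$ reduce to $A_{ij}+A_{ji}$, so the second condition reads $A+A^{T}=-BGB^{T}$ with $G_{kl}=\bar g(V_k,V_l)$, rather than $A\Lambda^{T}+\Lambda A^{T}=-BGB^{T}$; either equation is solvable by symmetry of $BGB^{T}$, so nothing is lost.)

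The one genuine flaw is in your globalization step, where you assert that the relations $\bar g(N_i,\xi_j)=\delta_{ij}$ and $\bar g(N_i,N_j)=0$ ``pin down the bundle they generate uniquely once $\{\xi_j\}$ is fixed.'' This is false for $r\ge 2$: if $\{N_i\}$ satisfies both sets of relations, then so does $N_i'=N_i+\sum_k c_{ik}\xi_k$ for any antisymmetric matrix $(c_{ik})$ (for instance $N_1'=N_1+\xi_2$, $N_2'=N_2-\xi_1$), and $\mathrm{span}\{N_i'\}\neq\mathrm{span}\{N_i\}$. So uniqueness cannot be what glues the local constructions, and a partition of unity applied to the candidate subbundles will not in general preserve nullity. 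The repair is cheap and is implicit in \cite{B1}: fix the auxiliary complement $H$ of $Rad\,TM$ in $S(TM^{\perp})^{\perp}$ globally once and for all (a complement of a subbundle always exists globally), and note that your construction is then canonical given $H$: letting $W_i\in\Gamma(H|_U)$ be the unique local sections with $\bar g(W_i,\xi_j)=\delta_{ij}$, one has $N_i=W_i-\tfrac12\sum_j\bar g(W_i,W_j)\xi_j$, and under a change of frame $\xi_i\mapsto\sum_j c_{ij}\xi_j$ both $W_i$ and $N_i$ transform by $(c^{-1})^{T}$, so $\mathrm{span}\{N_1,\dots,N_r\}$ is frame-independent and defines a global subbundle. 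With that replacement your proof is complete and coincides with the one in \cite{B1}.
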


Let $\bar{\nabla}$ be the Levi-Civitia connection on $\bar{M}$. We have, from the above mentioned theory, the Guass and Weingarten formulae as:
\begin{equation*}\label{eq21}
	\bar\nabla_U V = \nabla_U V +h(U,V) \quad \forall ~ U, V\in \Gamma(TM)
\end{equation*}
and
\begin{equation*}
	\bar\nabla_U N = -A_N U + \nabla_U^{t}N \quad \forall ~ U \in \Gamma(TM), N \in \Gamma(tr(TM))
\end{equation*}\\
where $\{\nabla_U V,-A_N U\}$ and $\{h(U,V),\nabla_U^{t}N\}$ belong to $\Gamma(TM)$ and $\Gamma(tr(TM))$ respectively. $\nabla$ and $\nabla^{t}$ are linear connections on $M$ and on the vector bundle $tr(TM)$.\\

Considering the projection morphisms $L$ and $S$ of $tr(TM)$ on $ltr(TM)$ and on $S(TM^{\perp})$, we have\\
\begin{equation}\label{1}
	\bar\nabla_U V = \nabla_U V +h^{l}(U,V)+h^{s}(U,V)\\
\end{equation}
\begin{equation}\label{2}
	\bar\nabla_U N = -A_N U + \nabla_U^{l}N+D^{s}(U,N)\\
\end{equation}
\begin{equation}\label{3}
	\bar\nabla_U W = -A_W U + \nabla_U^{s}W+D^{l}(U,W)\\
\end{equation}\\
where $h^{l}(U,V)=Lh(U,V) ,h^{s}(U,V)=Sh(U,V), \{\nabla_U V,A_N U,A_W U \}\in\Gamma(TM)$,\\$\{\nabla_U^{l}N,D^{l}(U,W)\}\in\Gamma(ltr(TM))$ and $\{\nabla_U^{s}W,D^{s}(U,N)\}\in\Gamma(S(TM^{\perp}))$.Then considering $(\ref{1})-(\ref{3})$ and the fact that $\bar{\nabla}$ is a metric connection, the following holds: \\
\begin{equation*}\label{4}
	\bar{g}(h^{s}(U,V),W)+\bar{g}(V,D^{l}(U,W))=\bar{g}(A_W U,V)
\end{equation*}
\begin{equation*}\label{5}
	\bar{g}(D^{s}(U,N),W)=\bar{g}(A_W U,N).
\end{equation*}\\
Let $J$ be a projection of $TM$ on $S(TM)$.Then we have\\
\begin{equation*}\label{6}
	\nabla_U JV= \nabla_U^{*}JV + h^{*}(U,JV)
\end{equation*}
\begin{equation*}\label{7}
	\nabla_U E=-A_E^{*}U+\nabla_U^{*t}E
\end{equation*}\\
for any $U,V\in\Gamma(TM)$ and $E\in\Gamma(Rad(TM))$, where$\{\nabla_U^{*}JV,A_E^{*}U\}$ and $\{h^{*}(U,JV),\nabla_U^{*t}E\}$ belong to $\Gamma(S(TM))$ and $\Gamma(Rad(TM))$ respectively.\\
Using the above equations, we obtain 
\begin{equation*}\label{8}
	\bar{g}(h^{l}(U,JV),E)=g(A_E^{*}U,JV)
\end{equation*}
\begin{equation*}\label{9}
	\bar{g}(h^{*}(U,JV),N)=g(A_N U,JV)
\end{equation*}
\begin{equation*}\label{10}
	\bar{g}(h^{l}(U,E),E)=0, \quad A_E^{*}E=0.
\end{equation*}\\
In general, $\nabla$ on $M$ is not metric connection. Since $\bar{\nabla}$ is a metric connection, it follows from $(2.3)$ that \\
\begin{equation*}\label{11}
	(\nabla_U g)(V,Z)=\bar{g}(h^{l}(U,V),Z)+\bar{g}(h^{l}(U,Z),V)
\end{equation*}\\
for any $U,V,Z\in\Gamma(TM)$. Here $\nabla^{*}$ is a metric connection on $S(TM)$.\\

\subsection{Metallic semi-Riemannian manifold}\label{subsec1}\

Some  polynomial structures naturally arise as $C^{\infty}$
tensor fields $\bar{J}$ of type $(1, 1)$ which are roots of the algebraic equation
\[Q(\bar{J}) := \bar{J}^{n} + a_{n}\bar{J}^{n-1} +...... + a_{2}\bar{J} + a_{1}I_{\mathfrak{X}(\bar{M})} = 0\]
where $I_{\mathfrak{X}(\bar{M})}$ is the identity map on the Lie algebra of vector fields on $\bar{M}$. In particular, if the structure polynomial is $Q(\bar{J})= \bar{J}^{2}-p\bar{J} -qI_{\mathfrak{X}(\bar{M})}$
, with $p$ and $q$ positive integers, its solution $\bar{J}$ is called a metallic structure. For different values of $p$ and $q$, the $(p, q)$ metallic number introduced by \cite{V3} is the positive root of the quadratic equation $x^{2}-px-q = 0$, namely 
$\sigma_{p,q}=\dfrac{p+\sqrt{p^{2}+4q}}{2}$ and is called the metallic mean.\\
A polynomial structure on a semi-Riemannian manifold $\bar{M}$ is known metallic if it is determined by $\bar{J}$ such that
\begin{equation}\label{a}
	\bar{J}^{2}=p\bar{J}+qI
\end{equation}
If a semi-Riemannian metric $\bar{g}$ satisfies the equation
\begin{equation}\label{b}
	\bar{g}(U,\bar{J}V)=\bar{g}(\bar{J}U,V)	
\end{equation}
which yields
\begin{equation}\label{c}
	\bar{g}(\bar{J}U,\bar{J}V)=p\bar{g}(U,\bar{J}V)+q\bar{g}(U,V)	
\end{equation},
then $\bar{g}$ is called $\bar{J}$-compatible.

\begin{definition}\cite{MSRM6} A semi-Riemannian manifold $(\bar{M},\bar{g})$ equipped with $\bar{J}$ such that the semi-Riemannian metric $\bar{g}$ is $\bar{J}$-compatible is called metallic semi-Riemannian manifold and $(\bar{g},\bar{J})$ is called metallic structure on $\bar{M}$.\\
	
	Let $(M,g,S(TM),S(TM^{\perp}))$ be a lightlike submanifold of a metallic semi-Riemannian manifold $(\bar{M},\bar{g},\bar{J})$.\\
	For each $U$ tangent to $M$, $\bar{J}U$ can be written as follows:
	\begin{equation}\label{t1}
		\bar{J}U=fU+wU= fU+w_{l}U+w_{s}U
	\end{equation}
	where $fU$ and $wU$ are the tangential and the transversal parts of $\bar{J}U$; $w_{l}$ and $w_{s}$ are projections on $ltr(TM)$ and $S(TM^{\perp})$ respectively. In addition, for any $V\in\Gamma(tr(TM))$, $\bar{J}V$ can be written as
	\begin{equation}\label{t2}
		\bar{J}V=BV+CV
	\end{equation}
	where $BV$ and $CV$ are the tangential and the transversal parts of $\bar{J}V$.\\

\end{definition}	

\subsection{Quarter symmetric non-metric connection}\label{subsec2}\

As per \cite{Q1}, for a Levi-Civita connection $\bar{D}$ on the metallic semi-Riemannian manifold $\bar{M}$,
by setting \begin{equation}\label{a1}
	\bar D_{U}{V}=\bar\nabla_{U}{V}+\pi(V)\bar{J}U
\end{equation}
for any $U,V\in\Gamma(T\bar{M})$ ,
we see that $\bar{D}$ is linear connection on $\bar{M}$, where $\pi$ is a 1-form on $\bar{M}$ with $\eta$ as associated vector field such that
\begin{equation*}
	\pi(U)=\bar{g}(U,\eta)
\end{equation*}
Let the torsion tensor of $\bar{D}$ on $\bar{M}$ be denoted by $\bar{T}$.
\begin{equation*}
	\bar{T}^{\bar{D}}(U,V)=\pi(V)\bar{J}U-\pi(U)\bar{J}V
\end{equation*}

\begin{equation*}
	(\bar D_{U} \bar g)(V,Z)=-\pi(V)\bar{g}(\bar{J}U,Z)-\pi(Z)\bar{g}(\bar{J}U,V)
\end{equation*}
and
\begin{equation}\label{a2}
	\bar D_{U} \bar{J}V = \bar{J}\bar D_{U}V-p\pi(V)\bar{J}U-q\pi(V)U+\pi(\bar{J}V)\bar{J}U
\end{equation}
for any $U,V,Z\in\Gamma(TM)$.Thus $\bar{D}$ is a quarter-symmetric non-metric connection on $\bar{M}$.\\

Consider a lightlike submanifold $(M,g,S(TM),S(TM^{\perp}))$ of the metallic semi-Riemannian manifold $(\bar{M},\bar{g})$ with quarter symmetric non-metric connection $\bar{D}$. Then the Gauss and Weingarten formulae with respect to $\bar{D}$ are given by\\
\begin{equation}\label{q1}
	\bar D_{U}V=D_U V+\bar{h^{l}}(U,V)+\bar{h^{s}}(U,V)
\end{equation}
\begin{equation}\label{q2}
	\bar D_{U}N=-\bar A_N U+\bar{\nabla}^{l}_U N+\bar{D^{s}}(U,N)	
\end{equation}
\begin{equation}\label{q3}
	\bar D_{U}W=-\bar A_W U+\bar{\nabla}^{s}_U W+\bar{D^{l}}(U,W)	
\end{equation}

for any $U,V\in\Gamma(TM)$, $N\in\Gamma(ltr(TM))$ and $W\in\Gamma(S(TM^{\perp}))$, where $\{D_U V,\bar A_N U,\bar A_W U\} \in\Gamma(TM)$ and $\bar{\nabla}^{l}$ and $\bar{\nabla}^{s}$ are linear connections on $ltr(TM)$ and $S(TM^{\perp})$ respectively. Both $\bar A_N$ and $\bar A_W$ are linear operators on $\Gamma(TM)$. From $(\ref{a1})$,$(\ref{q1})$,$(\ref{q2})$,$(\ref{q3})$, we obtain
\begin{equation*}\label{q4}
	D_U V= \nabla_U V +\pi(V)fU
\end{equation*}
\begin{equation}\label{q5}
	\bar{h^{l}}(U,V)= h^{l}(U,V)+\pi(V)w_{l}U
\end{equation}
\begin{equation}\label{q6}
	\bar{h^{s}}(U,V)= h^{s}(U,V)+\pi(V)w_{s}U
\end{equation}
\begin{equation*}\label{q7}
	\bar A_N U= A_N U-\pi(N)fU
\end{equation*}
\begin{equation*}\label{q8}
	\bar \nabla_U^{l} N=\nabla_U^{l}N+\pi(N)w_{l}U
\end{equation*}
\begin{equation*}\label{q9}
	\bar{D^{s}}(U,N)=D^{s}(U,N)+\pi(N)w_{s}U
\end{equation*}
\begin{equation*}\label{q10}
	\bar A_W U= A_W U-\pi(W)fU
\end{equation*}
\begin{equation*}\label{q11}
	\bar \nabla_U^{s} W=\nabla_U^{s}W+\pi(W)w_{s}U
\end{equation*}
\begin{equation*}\label{q12}
	\bar{D^{l}}(U,W)=D^{l}(U,W)+\pi(W)w_{l}U
\end{equation*}
From (\ref{q4}) we get,
\begin{equation*}\label{q13}
	(D_U g)(V,Z)=g(h(U,V),Z)+g(h(U,Z),V)-\pi(V)g(fU,Z)-\pi(Z)g(fU,V)
\end{equation*}
On the other hand, the torsion tensor of the induced connection $D$ is 
\begin{equation*}\label{q14}
	T^{D}(U,V)= \pi(V)fU-\pi(U)fV
\end{equation*}
\begin{equation}\label{q15}
	\bar{g}(\bar h^{s}(U,V),W)+\bar{g}(V,\bar D^{l}(U,W))=\bar{g}(\bar A_W U,V)+\pi(W)\bar{g}(fU,V)+\pi(V)\bar{g}(w_s U, W)+\pi(W)\bar{g}(V,w_l U)
\end{equation}
\begin{equation}\label{q16}
	\bar{g}(\bar D^{s}(U,N),W)=\bar{g}(\bar A_W U,N)+\pi(W)\bar{g}(fU,N)+\pi(N)\bar{g}(w_s U,W)
\end{equation}\\
\begin{proposition}
	Let $M$ be a lightlike submanifold of a metallic semi-Riemannian manifold $\bar{M}$ with a quarter symmetric non-metric connection $\bar{D}$. Then the induced connection $D$ on the lightlike submanifold $M$ is also a quarter symmetric non-metric connection.
\end{proposition}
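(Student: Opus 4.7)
The plan is to verify both halves of the definition of a quarter symmetric non-metric connection for the induced connection $D$: first, that its torsion has the required skew form built from a $(1,1)$-tensor and a $1$-form, and second, that $D$ is genuinely non-metric. Both facts will fall out by direct substitution from the formulas already derived in Subsection~\ref{subsec2}, so very little new computation is needed.

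First I would recall that the tangential projection gave $D_U V = \nabla_U V + \pi(V)fU$, where $\nabla$ is the torsion-free linear connection on $M$ induced by the Levi-Civita connection $\bar\nabla$, $f$ is the $(1,1)$-tensor field on $M$ obtained as the tangential part of $\bar J$ in (\ref{t1}), and $\pi$ is the restriction to $\Gamma(TM)$ of the $1$-form on $\bar M$. Computing the torsion of $D$ directly,
\begin{equation*}
T^{D}(U,V) = D_U V - D_V U - [U,V] = \bigl(\nabla_U V - \nabla_V U - [U,V]\bigr) + \pi(V)fU - \pi(U)fV.
\end{equation*}
Since $\nabla$ is torsion-free, the first bracket vanishes and we obtain $T^{D}(U,V) = \pi(V)fU - \pi(U)fV$, which is exactly the quarter symmetric shape relative to $f$ and $\pi$; this matches the torsion formula already displayed above the proposition.

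Next I would read off the non-metricity from the equation
\begin{equation*}
(D_U g)(V,Z) = g(h(U,V),Z) + g(h(U,Z),V) - \pi(V)g(fU,Z) - \pi(Z)g(fU,V)
\end{equation*}
stated just prior to the proposition. The right-hand side is not identically zero (the terms involving $\pi$ and $f$ do not in general cancel the second fundamental form contributions), so $D_U g \neq 0$ and $D$ is non-metric. Combining this with the torsion computation establishes that $D$ is a quarter symmetric non-metric connection on $M$.

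I do not anticipate a serious obstacle here; the only point requiring a line of justification is that $f$ is genuinely a $(1,1)$-tensor field on $M$ and $\pi|_{TM}$ is a $1$-form on $M$, both of which are immediate from (\ref{t1}) and the definition $\pi(U) = \bar g(U,\eta)$. Thus the proof is essentially an assembly of identities already in place rather than a fresh calculation.
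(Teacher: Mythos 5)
Your proposal is correct and follows essentially the same route as the paper, which simply displays the torsion formula $T^{D}(U,V)=\pi(V)fU-\pi(U)fV$ and the non-metricity identity for $(D_{U}g)(V,Z)$ immediately before the proposition and lets these two facts constitute the justification. Your explicit derivation of the torsion from $D_U V=\nabla_U V+\pi(V)fU$ and the torsion-freeness of $\nabla$ just makes the paper's implicit step visible.
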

Let $J$ be the projection of $TM$ on $S(TM)$, then any $U\in\Gamma(TM)$, can be written as $U=JU+\Sigma_{i=1}^{r} \eta_{i}(U)\xi_{i}$, 
\begin{equation}\label{z1}
	\eta_{i}(U)=g(U,N_{i})
\end{equation}, where $\{\xi_{i}\}_{i=1}^{r}$ is a basis for $Rad(TM)$.Therefore,\\
\begin{equation}\label{q17}
	D_U JV= D_U^{*}JV+\bar h^{*}(U,JV)
\end{equation}
\begin{equation}\label{q18}
	D_U \xi=-\bar A_\xi ^{*} U+ \bar \nabla_U^{*t}\xi
\end{equation}
For any $U,V\in\Gamma(TM)$, where $\{D_U^{*}JV,\bar A_\xi ^{*} U\}\in\Gamma(S(TM))$ and $\{\bar h^{*}(U,JV),\bar \nabla_U^{*t}\xi\}\in\Gamma(Rad(TM))$. From $(\ref{q17}),(\ref{q18})$ we obtain\\
\begin{equation*}\label{q19}
	D_U^{*}JV=\nabla_U^{*}JV+\pi(JV)JfU
\end{equation*}
\begin{equation}\label{q20}
	\bar h^{*}(U,JV)=h^{*}(U,JV)+\pi(JV)\Sigma_{i=1}^{r} \eta_{i}(fU)\xi_{i}
\end{equation}
and 
\begin{equation}\label{q21}
	\bar A_\xi ^{*} U=A_{\xi}^{*}U-\pi(\xi)JfU
\end{equation}
\begin{equation*}\label{q22}
	\bar \nabla_U^{*t}\xi=  \nabla_U^{*t}\xi+\pi(\xi)\eta(fU)\xi
\end{equation*}
From (\ref{q5}),(\ref{q7}),(\ref{q20}),(\ref{q21}),we have\\
\begin{equation*}\label{q23}
	g(\bar h^{l}(U,JV),\xi)=g(\bar A_\xi ^{*} U,JV)+\pi(\xi)g(JfU,JV)+\pi(JV)g(w_{l}U,\xi)
\end{equation*}
\begin{equation*}\label{q24}
	g(\bar h^{*}(U,JV),N)=g(\bar A_N U,JV)+\pi(N)g(fU,JV)+\pi(JV)\eta(fU)
\end{equation*}
\begin{equation*}\label{q25}
	\bar{g}(\bar h^{l}(U,\xi),\xi)=\pi(\xi)\bar{g}(w_{l}U,\xi),\quad \bar A_\xi ^{*} \xi=-\pi(\xi)f\xi
\end{equation*}
\section{INVARIANT LIGHTLIKE SUBMANIFOLDS}\label{sec3}\
This section provides an illustration that elucidates the configuration of an invariant lightlike submanifold of metallic semi-Riemannian manifold. Subsequently, it examines its geometry when the manifold is endowed with a quarter symmetric non-metric connection.
\begin{definition}\cite{GSRM5}
	Let $M$ be a lightlike submanifold of metallic semi-Riemannian manifold $\bar{M}$. If
	\begin{equation}\label{21}
		\bar{J}Rad(TM)=Rad(TM),\quad \bar{J}S(TM)=S(TM)	
	\end{equation}
	
	then, $M$  is said to be invariant lightlike submanifold of $\bar{M}$ metallic semi-Riemannian manifold.\\
\end{definition}
\begin{proposition}\cite{GSRM5}
	Let $M$ be an invariant lightlike submanifold of a metallic semi-Riemannian manifold $(\bar{M},\bar{J})$. Then, the distribution $ltr(TM)$ is invariant with respect to $\bar{J}$.
\end{proposition}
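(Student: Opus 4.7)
The plan is to take an arbitrary $N\in\Gamma(ltr(TM))$, write $\bar{J}N$ in the orthogonal decomposition $T\bar{M}|_{M}=\bigl(Rad(TM)\oplus ltr(TM)\bigr)\perp S(TM)\perp S(TM^{\perp})$, and show that the components in $S(TM)$, $S(TM^{\perp})$, and $Rad(TM)$ all vanish, leaving $\bar{J}N\in\Gamma(ltr(TM))$. The only tools I invoke are the compatibility relation $\bar{g}(U,\bar{J}V)=\bar{g}(\bar{J}U,V)$ from (\ref{b}) together with the invariance hypotheses $\bar{J}Rad(TM)=Rad(TM)$ and $\bar{J}S(TM)=S(TM)$ from (\ref{21}).

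The $S(TM)$-component dies immediately: for any $X\in\Gamma(S(TM))$, $\bar{g}(\bar{J}N,X)=\bar{g}(N,\bar{J}X)=0$ because $\bar{J}X\in\Gamma(S(TM))$ and $ltr(TM)\perp S(TM)$; nondegeneracy of $\bar{g}|_{S(TM)}$ then finishes this part. For the $S(TM^{\perp})$-component, observe that $TM=Rad(TM)\oplus S(TM)$ is $\bar{J}$-invariant, so for $W\in\Gamma(S(TM^{\perp}))$ compatibility yields $\bar{g}(\bar{J}W,Z)=\bar{g}(W,\bar{J}Z)=0$ for every $Z\in\Gamma(TM)$, placing $\bar{J}W$ in $\Gamma(TM^{\perp})=\Gamma(Rad(TM))\perp\Gamma(S(TM^{\perp}))$; once the $Rad(TM)$-part of $\bar{J}W$ is shown to vanish (i.e.\ $\bar{J}S(TM^{\perp})\subseteq S(TM^{\perp})$), one obtains $\bar{g}(\bar{J}N,W)=\bar{g}(N,\bar{J}W)=0$ since $N\perp S(TM^{\perp})$. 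For the $Rad(TM)$-component of $\bar{J}N$, pair against a local basis $\{N_{j}\}$ of $ltr(TM)$: $\bar{g}(\bar{J}N,N_{j})=\bar{g}(N,\bar{J}N_{j})$, and the previous steps applied to $N_{j}$ put $\bar{J}N_{j}\in\Gamma(Rad(TM)\oplus ltr(TM))$; the nullity $\bar{g}(N_{i},N_{j})=0$ together with the symmetry $\bar{g}(\bar{J}N_{i},N_{j})=\bar{g}(\bar{J}N_{j},N_{i})$ then reduces the matter to a symmetric matrix of scalars that is forced to be zero.

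The main obstacle is the coupled step: the vanishing of the $S(TM^{\perp})$-part of $\bar{J}N$ and of the $Rad(TM)$-part of $\bar{J}W$ are dual under the pairing $\bar{g}$, so neither follows from the hypotheses in isolation. I would close the loop either by a simultaneous bootstrap using the metallic identity $\bar{J}^{2}=p\bar{J}+qI$ (which relates $\bar{J}^{2}N\in ltr(TM)$ back to $\bar{J}N$ and constrains the stray components), or by invoking the freedom in choosing the non-canonical lightlike transversal bundle $ltr(TM)$ so that invariance is enforced from the outset. The reverse inclusion $\bar{J}(ltr(TM))\supseteq ltr(TM)$, giving equality rather than mere containment, follows because $\bar{J}|_{ltr(TM)}$ satisfies the same metallic polynomial and is therefore invertible.
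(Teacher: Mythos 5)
First, a point of reference: the paper offers no proof to compare against --- the proposition is imported verbatim from \cite{GSRM5} --- so your attempt has to stand on its own. The parts you carry out are correct: the $S(TM)$-component of $\bar{J}N$ does vanish, $\bar{J}W$ does land in $TM^{\perp}$, and you have located exactly the right obstruction, namely that the $S(TM^{\perp})$-part of $\bar{J}N$ and the $Rad(TM)$-part of $\bar{J}W$ are dual under $\bar{g}$ and neither is controlled by the invariance hypotheses. The gap is that neither of your two proposed ways of closing the loop works, and in fact the loop cannot be closed from these hypotheses: the assertion already fails as linear algebra. Take $\bar{M}=\mathbb{R}^{4}$ with basis $\{\xi,N,W,e\}$, $\bar{g}(\xi,N)=\bar{g}(W,W)=\bar{g}(e,e)=1$ and all other products zero, and set $\bar{J}\xi=\sigma\xi$, $\bar{J}e=\sigma e$, $\bar{J}W=\xi+(p-\sigma)W$, $\bar{J}N=-\tfrac{1}{\sqrt{p^{2}+4q}}\,\xi+\sigma N+W$, where $\sigma$ is the metallic root. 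A direct check shows that $\bar{J}$ is $\bar{g}$-symmetric and satisfies $\bar{J}^{2}=p\bar{J}+qI$, and that $M$ with $TM=\mathrm{span}\{\xi,e\}$ is an invariant $1$-lightlike submanifold ($Rad(TM)=\mathrm{span}\{\xi\}$ and $S(TM)=\mathrm{span}\{e\}$ are both invariant), yet $\bar{J}N$ has nonzero components along both $W$ and $\xi$. So the metallic identity does not constrain the stray components (it is satisfied here), and your claim that the symmetric matrix $\bar{g}(\bar{J}N_{i},N_{j})$ is ``forced to be zero'' is a non sequitur --- symmetry never forces vanishing, and here $\bar{g}(\bar{J}N,N)=-1/\sqrt{p^{2}+4q}\neq 0$.

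What survives is your second escape route, which you do not carry out: the screens are not canonical, and in the example above replacing $W$ by $W-\tfrac{1}{\sqrt{p^{2}+4q}}\,\xi$ produces a new $S(TM^{\perp})$ and a new $ltr(TM)$ on which $\bar{J}$ acts by $p-\sigma$ and $\sigma$ respectively, i.e.\ invariantly. The defensible statement is therefore existential --- for an invariant $M$ one can \emph{choose} $S(TM^{\perp})$, and hence $ltr(TM)$, to be $\bar{J}$-invariant --- or else one must add $\bar{J}(S(TM^{\perp}))=S(TM^{\perp})$ to the hypotheses; even then the $Rad(TM)$-component of $\bar{J}N$ is only forced to vanish when $\bar{J}$ acts on $Rad(TM)$ with a single metallic eigenvalue (for $r\geq 2$ with both eigenvalues $\sigma$ and $p-\sigma$ present, a nonzero symmetric coupling $\bar{J}N_{1}=\sigma N_{1}+t\xi_{2}$, $\bar{J}N_{2}=(p-\sigma)N_{2}+t\xi_{1}$ is consistent with everything). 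As written, your proof cannot be completed, and the proposition itself should be treated as requiring either a compatible choice of screens or strengthened hypotheses.
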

\begin{example}
	Consider a metallic semi-Riemannian manifold $\bar{M}= (R_1^{5},\bar{g})$ of signature $(-,+,+,+,+)$ with respect to the basis $\{\partial y_1,\partial y_2,\partial y_3,\partial y_4,\partial y_5\}$, where the metallic structure $\bar{J}$ is defined by $\bar{J}(y_1,y_2,y_3,y_4,y_5)=(\omega y_1,(p-\omega) y_2,(p-\omega) y_3,\omega y_4,(p-\omega) y_5)$.\\
	
	Let $M$ be a submanifold of $(R_1^{5},\bar{J},\bar{g})$ given by\\
	\begin{equation*}
		y_1=0,\quad y_2=\frac{1}{2}(\sqrt{3}t_4+ t_2), 
	\end{equation*}
	\begin{equation*}
		y_3=\frac{1}{2}(-\sqrt{3}t_2+ t_4),\quad y_4=t_1,\quad y_5=t_4,
	\end{equation*}
	Then $TM$ is spanned by $\{D_1,D_2,D_3\}$, where
	\begin{equation*}
		D_1= \frac{1}{2}(-\sqrt{3}\partial y_3+\partial y _2),
	\end{equation*}
	\begin{equation*}
		D_2=\partial y_4,
	\end{equation*}
	\begin{equation*}
		D_3=\partial y_5+ \frac{1}{2}(\sqrt{3}\partial y_2+\partial y _3)
	\end{equation*}\\
	Hence $M$ is a 1-lightlike submanifold of $R_1^{5}$ with \\
	\begin{equation*}
		Rad(TM)=Span\{D_3\}\quad and\quad S(TM)=Span\{D_1,D_2\}
	\end{equation*}
	Therefore
	\begin{equation*}
		\bar{J}D_3= (p-\omega)D_3\in\Gamma(Rad(TM)),\quad \bar{J}D_1=(p-\omega) D_1\in\Gamma(S(TM)),\quad \bar{J}D_2=\omega D_2\in\Gamma(S(TM)),
	\end{equation*}\\
	which means that $S(TM)$ and $Rad(TM)$ is invariant with respect to $\bar{J}$. Also, for $N\in\Gamma(ltr(TM))$ and  $W\in\Gamma(S(TM^{\perp}))$, where\\
	\begin{equation*}
		N=\frac{1}{2}\{-\partial y_5+ \frac{1}{2}\partial y_3+\frac{\sqrt{3}}{2} \partial y_2\},\quad W=\partial y_1
	\end{equation*}
	It follows that  $ltr(TM)$ and $S(TM^{\perp})$ are invariant distributions with respect to $\bar{J}$. Thus, $M$ is an invariant lightlike submanifold of $\bar{M}$.
\end{example}

\begin{theorem}
	For an invariant lightlike submanifold $M$ of a metallic semi-Riemannian $\bar{M}$ with a quarter symmetric non-metric connection $\bar{D}$, the radical distribution is integrable  if and only if 
	\[
	\bar A^{*}_{\bar{J}U}V-p \bar A^{*}_{U}V= \bar A^{*}_{\bar{J}V}U-p\bar A^{*}_{V}{U}
	\]
	for $U,V\in{\Gamma(Rad\;TM)}, Z\in\Gamma(S(TM))$.\\
\end{theorem}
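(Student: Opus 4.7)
The plan is to prove the equivalence in two stages: first, to reduce integrability of $Rad(TM)$ to the symmetry condition $\bar{A}^{*}_{U}V=\bar{A}^{*}_{V}U$ for $U,V\in\Gamma(Rad\,TM)$, and then to show, via the metallic relation $\bar{J}^{2}=p\bar{J}+qI$, that the stated identity is algebraically equivalent to this same symmetry.

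For the first reduction, I would use that $TM=Rad(TM)\perp S(TM)$, so $[U,V]\in\Gamma(Rad\,TM)$ iff $g([U,V],Z)=0$ for every $Z\in\Gamma(S(TM))$. Writing $[U,V]=\bar{D}_{U}V-\bar{D}_{V}U-\bar{T}^{\bar{D}}(U,V)$ and inserting the Gauss formula (\ref{q1}) together with (\ref{q18}), the normal pieces $\bar{h}^{l}(U,V),\bar{h}^{s}(U,V)$ pair trivially with $Z\in S(TM)$ because $S(TM)\perp ltr(TM)$ and $S(TM)\perp S(TM^{\perp})$, while the torsion $\pi(V)\bar{J}U-\pi(U)\bar{J}V$ lies in $Rad(TM)$ (invariance). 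What remains is $g([U,V],Z)=g(\bar{A}^{*}_{U}V-\bar{A}^{*}_{V}U,Z)$, and nondegeneracy of $g$ on $S(TM)$ yields the claimed symmetry.

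The crucial bridge is the identity $\bar{A}^{*}_{\bar{J}V}U=\bar{J}\bar{A}^{*}_{V}U$ for $U,V\in\Gamma(Rad\,TM)$, which I would extract by applying (\ref{a2}) and comparing the $S(TM)$-components of both sides. On the left, since $\bar{J}V\in\Gamma(Rad\,TM)$, (\ref{q18}) gives $-\bar{A}^{*}_{\bar{J}V}U$ in $S(TM)$. On the right, $\bar{J}(\bar{h}^{l}(U,V))$ stays in $ltr(TM)$ by Proposition 3.2, and for an invariant submanifold a short argument (dualizing against $Z\in S(TM)$ and $N\in ltr(TM)$) shows $\bar{J}(S(TM^{\perp}))\subseteq Rad(TM)\oplus S(TM^{\perp})$, so $\bar{J}(\bar{h}^{s}(U,V))$ contributes nothing to $S(TM)$. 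The terms $-p\pi(V)\bar{J}U-q\pi(V)U+\pi(\bar{J}V)\bar{J}U$ all lie in $Rad(TM)$. Hence the $S(TM)$-part of the right-hand side is $-\bar{J}\bar{A}^{*}_{V}U$, and the comparison yields the desired identity. Swapping $U$ and $V$ gives $\bar{A}^{*}_{\bar{J}U}V=\bar{J}\bar{A}^{*}_{U}V$.

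Substituting both identities into the statement's condition rewrites it as $(\bar{J}-pI)(\bar{A}^{*}_{U}V-\bar{A}^{*}_{V}U)=0$. From $\bar{J}^{2}=p\bar{J}+qI$ one has $(\bar{J}-pI)\bar{J}=qI$, so with $q$ a positive integer $\bar{J}-pI$ is invertible on $S(TM)$, and the condition collapses to $\bar{A}^{*}_{U}V=\bar{A}^{*}_{V}U$, which by the first step is equivalent to integrability of $Rad(TM)$. The main obstacle is the bookkeeping in the bridge lemma: one must carefully verify that every transversal term coming from $\bar{J}\bar{D}_{U}V$ projects trivially onto $S(TM)$, which is where the invariance hypothesis and Proposition 3.2 do the heavy lifting.
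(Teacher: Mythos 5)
Your proof is correct, but it follows a genuinely different route from the paper's. The paper proves the theorem in one direct computation: it writes $g([U,V],Z)$ in terms of $\bar D$, uses the compatibility relation (\ref{c}) to express $q\,\bar g(X,Z)$ as $\bar g(\bar J X,\bar J Z)-p\,\bar g(X,\bar J Z)$, commutes $\bar J$ past $\bar D$ via (\ref{a2}), and then reads off the screen shape operators from (\ref{q18}), so that the stated combination $\bar A^{*}_{\bar J U}V-p\bar A^{*}_{U}V-\bar A^{*}_{\bar J V}U+p\bar A^{*}_{V}U$ appears verbatim paired against $\bar J Z$. You instead factor the statement into three independent pieces: (i) integrability of $Rad(TM)$ is equivalent to the symmetry $\bar A^{*}_{U}V=\bar A^{*}_{V}U$ (which needs only that the torsion of $\bar D$ is $Rad(TM)$-valued on an invariant submanifold, not the metallic identity); (ii) the commutation identity $\bar A^{*}_{\bar J V}U=\bar J\bar A^{*}_{V}U$, obtained by comparing $S(TM)$-components in (\ref{a2}) — your bookkeeping there is sound, since $\bar g(\bar J W,Z)=\bar g(W,\bar J Z)=0$ for $W\in\Gamma(S(TM^{\perp}))$, $Z\in\Gamma(S(TM))$, and $ltr(TM)$ is $\bar J$-invariant by the paper's Proposition; and (iii) invertibility of $\bar J-pI$ from $(\bar J-pI)\bar J=qI$. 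Your decomposition buys a sharper structural conclusion — the theorem's condition is literally equivalent to the plain symmetry of $\bar A^{*}$ on $Rad(TM)$, which the paper's formulation obscures — at the price of having to prove the bridge lemma; the paper's computation is shorter but leaves the operator identity $\bar A^{*}_{\bar J V}U=\bar J\bar A^{*}_{V}U$ implicit. One cosmetic remark: in step (i) the nondegeneracy of $g$ on $S(TM)$ already gives $\bar A^{*}_{U}V=\bar A^{*}_{V}U$ as an equality of vectors (both sides lie in $\Gamma(S(TM))$), which is exactly what you need for step (iii), so the argument closes correctly.
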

\begin{proof}

	$g([U,V],Z)=\bar{g}(\bar D_U V-\pi(V)\bar{J}U-\bar D_V U+ \pi(U)\bar{J}V,Z)$\\
	
	\qquad $=\frac{1}{q}[\bar{g}(\bar D_U \bar{J}V,\bar{J}Z)-{p}\bar{g}(\bar D_U V,\bar{J}Z)-\bar{g}(\bar D_V \bar{J}U,\bar{J}Z)+{p}\bar{g}(\bar D_V U,\bar{J}Z)]$\\
	
	\qquad $=\frac{1}{q}[\bar{g}(-\bar A_{\bar J V}^{*}U+\bar{\nabla}_U^{*t}\bar{J} V,\bar{J}Z)-{p}\bar{g}(-\bar A_{ V}^{*}U+\bar{\nabla}_U^{*t} V,\bar{J}Z)-\bar{g}(-\bar A_{\bar J U}^{*}V+\bar{\nabla}_V^{*t}\bar{J} U,\bar{J}Z)+$\\
	
	\qquad$ {p}\bar{g}(-\bar A_{U}^{*}V+\bar{\nabla}_V^{*t}U,\bar{J}Z)]$\\
	
	\qquad$=\bar{g}(\bar A^{*}_{\bar{J}U}V-p \bar A^{*}_{U}V- \bar A^{*}_{\bar{J}V}U=p\bar A^{*}_{V}{U},\bar{J}Z)=0$\\
	
	Using the integrability of $Rad(TM)$ along with equation (\ref{a1}), (\ref{a2}), (\ref{q1}) and (\ref{q18}) the result follows. 
	\end{proof}
	
	\begin{theorem}
		If $M$ an invariant lightlike submanifold of metallic semi-Riemannian manifold $\bar{M}$ with a quarter symmetric non-metric connection $\bar{D}$, then the screen distribution is integrable iff\\
		\[\bar{h}^{*}(V,\bar{J}U)+p \bar{h}^{*}(U,V)= \bar{h}^{*}(U,\bar{J}V)+p\bar{h}^{*}(V,U)\]\\
		for all $U,V\in\Gamma (S(TM)), N\in\Gamma (ltr(TM))$.
		
		\begin{proof}
			$S(TM)$ is integrable iff $g([V,U],N)=0$ for all $U,V\in\Gamma (S(TM)), N\in\Gamma (ltr(TM))$.\\
			
			$0=\bar{g}(\bar{D}_V U,N)-\bar{g}(\bar{D}_U V,N)$\\
			
			\qquad$=\frac{1}{q}[\bar{g}(\bar D_V \bar{J}U,\bar{J}N)+p \pi(U)\bar{g}(\bar{J}V,\bar{J}N)+ q\pi(U)\bar{g}(V,\bar{J}N)-\pi(\bar{J}U)\bar{g}(\bar{J}V,\bar{J}N)-$\\
			
			\qquad$p\bar{g}(\bar D_V U,\bar{J}N)]-\frac{1}{q}[\bar{g}(\bar D_U \bar{J}V,\bar{J}N)+p \pi(V)\bar{g}(\bar{J}U,\bar{J}N)+ q\pi(V)\bar{g}(U,\bar{J}N)-$\\
			
			\qquad$\pi(\bar{J}V)\bar{g}(\bar{J}U,\bar{J}N)-p\bar{g}(\bar D_U V,\bar{J}N)]$\\

			\qquad$=\frac{1}{q}\bar{g}(D_V^{*} \bar{J}U,\bar{J}N)+\frac{1}{q}\bar{g}(\bar{h^{*}}(V,\bar{J}U),\bar{J}N)-\frac{p}{q}\bar{g}(D_V^{*}U,\bar{J}N)-\frac{p}{q}\bar{g}(\bar{h^{*}}(V,U),\bar{J}N)-$\\
			
			\qquad$\frac{1}{q}\bar{g}(D_U^{*}\bar{J}V,\bar{J}N)-\frac{1}{q}\bar{g}(\bar{h^{*}}(U,\bar{J}V),\bar{J}N)+\frac{P}{q}\bar{g}(D_U^{*} V,\bar{J}N)+\frac{p}{q}\bar{g}(\bar{h^{*}}(U,V),\bar{J}N)$\\
			
			Therefore, the equations (\ref{c}), (\ref{a2}) and (\ref{q17}) lead to the desired result.

		\end{proof}
	\end{theorem}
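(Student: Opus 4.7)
The plan is to observe that since $TM = \mathrm{Rad}(TM) \perp S(TM)$ and the pairing $\bar{g}(\xi_i, N_j) = \delta_{ij}$ is nondegenerate on $\mathrm{Rad}(TM) \times ltr(TM)$, the screen distribution $S(TM)$ is integrable if and only if $g([U,V], N) = 0$ for all $U, V \in \Gamma(S(TM))$ and all $N \in \Gamma(ltr(TM))$. I would begin by expressing the bracket in terms of $\bar{D}$, using $[U,V] = \bar{D}_U V - \bar{D}_V U - \bar{T}^{\bar{D}}(U,V)$ with $\bar{T}^{\bar{D}}(U,V) = \pi(V)\bar{J}U - \pi(U)\bar{J}V$. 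Since the invariance $\bar{J}S(TM) = S(TM)$ places the torsion term inside $\Gamma(S(TM))$, which is orthogonal to $ltr(TM)$, those contributions drop out and it suffices to compute $\bar{g}(\bar{D}_U V - \bar{D}_V U, N)$.

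Next, I would use the $\bar{J}$-compatibility relation derived from $(\ref{c})$, namely $\bar{g}(X,Y) = \frac{1}{q}\bigl[\bar{g}(\bar{J}X, \bar{J}Y) - p\bar{g}(X, \bar{J}Y)\bigr]$, to rewrite $\bar{g}(\bar{D}_U V, N)$ in the form $\frac{1}{q}\bigl[\bar{g}(\bar{J}\bar{D}_U V, \bar{J}N) - p\bar{g}(\bar{D}_U V, \bar{J}N)\bigr]$. Equation $(\ref{a2})$ then lets me replace $\bar{J}\bar{D}_U V$ by $\bar{D}_U \bar{J}V + p\pi(V)\bar{J}U + q\pi(V)U - \pi(\bar{J}V)\bar{J}U$. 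Crucially, by Proposition 3.2 we have $\bar{J}N \in \Gamma(ltr(TM))$, and since $U, \bar{J}U \in \Gamma(S(TM))$ with $S(TM) \perp ltr(TM)$, each of the three quarter-symmetric correction terms pairs trivially with $\bar{J}N$ and vanishes.

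The remaining task is to evaluate $\bar{g}(\bar{D}_U \bar{J}V, \bar{J}N)$ and $\bar{g}(\bar{D}_U V, \bar{J}N)$. Expanding via the Gauss formula $(\ref{q1})$ and the screen decomposition $(\ref{q17})$, I expect only the $\bar{h}^*$ component to survive: the $D_U^*$ piece lies in $S(TM) \perp ltr(TM)$, the $\bar{h}^l$ piece lies in $ltr(TM)$ where $\bar{g}(N_i, N_j) = 0$, and the $\bar{h}^s$ piece lies in $S(TM^\perp) \perp ltr(TM)$. This yields the compact expression
\begin{equation*}
\bar{g}(\bar{D}_U V, N) = \tfrac{1}{q}\bigl[\bar{g}(\bar{h}^*(U, \bar{J}V), \bar{J}N) - p\,\bar{g}(\bar{h}^*(U, V), \bar{J}N)\bigr],
\end{equation*}
together with its $U \leftrightarrow V$ counterpart. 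Subtracting and invoking the nondegeneracy of the pairing between $\mathrm{Rad}(TM)$ and $\bar{J}(ltr(TM)) = ltr(TM)$ converts the scalar identity into the claimed vector equation. The main obstacle I foresee is purely bookkeeping: verifying that every one of the quarter-symmetric correction terms introduced by $(\ref{a2})$ really vanishes against $\bar{J}N$, which is where the invariance hypothesis $\bar{J}S(TM) = S(TM)$ does the decisive work.
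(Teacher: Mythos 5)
Your proposal is correct and follows essentially the same route as the paper: reduce integrability of $S(TM)$ to $\bar{g}([U,V],N)=0$, discard the torsion term by invariance of $S(TM)$, rewrite $\bar{g}(\bar{D}_UV,N)$ via the metallic compatibility $(\ref{c})$ and relation $(\ref{a2})$, and isolate the $\bar{h}^{*}$ components through $(\ref{q1})$ and $(\ref{q17})$. Your write-up is in fact somewhat more careful than the paper's, since you justify explicitly why the quarter-symmetric correction terms and the $D^{*}$, $\bar{h}^{l}$, $\bar{h}^{s}$ pieces vanish against $\bar{J}N\in\Gamma(ltr(TM))$, and why nondegeneracy of the $Rad(TM)$--$ltr(TM)$ pairing upgrades the scalar identity to the stated vector equation.
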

	
	\begin{theorem}
		Let $M$ be an invariant lightlike submanifolds of metallic semi-Riemannian manifold $\bar{M}$ with a quarter symmetric non-metric connection $\bar{D}$. The radical distribution defines a totally geodesic foliation iff\\
		\[\bar A_{\bar{J}V}^{*} U=-p \bar A_V ^{*} U\]
		for all $U,V\in\Gamma (Rad(TM)), Z\in\Gamma (S(TM))$.
		\begin{proof}
			From the concept of quarter symmetric non-metric connection and the metallic structure on $\bar{M}$, we derive\\

			$g(D_U V,N)=\frac{1}{q}[\bar{g}(\bar{J}\bar D_U V,\bar{J}N)-p\bar{g}(\bar{J}\bar D_U V,N)]$\\
			
			Using equations (\ref{a1}), (\ref{a2}), (\ref{q1}) and (\ref{q18}) we get;\\
			
			$g(D_U V,N)=\frac{1}{q}\bar{g}(\bar D_U \bar{J}V,\bar{J}N)+\frac{p}{q} \pi(V)\bar{g}(\bar{J}U,\bar{J}N)+ \pi(V)\bar{g}(U,\bar{J}N)-\frac{1}{q}\pi(\bar{J}V)\bar{g}(\bar{J}U,\bar{J}N)-$\\
			
			\qquad$\frac{p}{q}\bar{g}(\bar D_U V,\bar{J}N)$\\
			
			\qquad$=\frac{1}{q}[\bar{g}( D_U \bar{J}V,\bar{J}N)-p\bar{g}( D_U V,\bar{J}N)]$\\
			
			\qquad$=\frac{1}{q}\bar{g}(-\bar A_{\bar J V}^{*}U,\bar{J}N)+\frac{1}{q} \bar{g}(\bar{\nabla}_U^{*t}\bar{J} V,\bar{J}N)+\frac{p}{q}\bar{g}(\bar A_{ V}^{*}U,\bar{J}N)-\frac{p}{q}\bar{g}(\bar{\nabla}_U^{*t} V,\bar{J}N)=0$\\
			
			Therefore, the result follows from the hypothesis.          
			
		\end{proof}
	\end{theorem}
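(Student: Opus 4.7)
The plan is to reduce the totally geodesic foliation condition on $Rad(TM)$ to an inner product test against sections of $ltr(TM)$, then exploit the metallic identity and the structure equations of the quarter symmetric connection to convert the test into the claimed shape operator identity, in the same spirit as the preceding two theorems of this section.

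Specifically, $Rad(TM)$ defines a totally geodesic foliation in $M$ exactly when $D_U V \in \Gamma(Rad(TM))$ for all $U,V\in\Gamma(Rad(TM))$, so I would examine $g(D_U V, N)$ for $N\in\Gamma(ltr(TM))$. Starting from $g(D_U V, N)=\bar g(\bar D_U V, N)$ (the transversal parts $\bar h^{l}(U,V)$ and $\bar h^{s}(U,V)$ contribute nothing, since $\bar g(N_i,N_j)=0$ by Theorem 1 and $S(TM^\perp)\perp ltr(TM)$), I would insert $\bar J$-factors via the identity $I=\frac{1}{q}(\bar J^{2}-p\bar J)$ to write
\begin{equation*}
g(D_U V, N)=\frac{1}{q}\bigl[\bar g(\bar J\bar D_U V,\bar J N)-p\,\bar g(\bar J\bar D_U V, N)\bigr].
\end{equation*}

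Next, equation (\ref{a2}) lets me commute $\bar J$ past $\bar D_U$, producing $\bar D_U \bar J V$ together with $\pi$-correction terms. By the invariance hypothesis and Proposition 3.2, both $\bar J V\in\Gamma(Rad(TM))$ and $\bar J N\in\Gamma(ltr(TM))$, so I can invoke (\ref{q18}) to expand
\begin{equation*}
D_U \bar J V=-\bar A^{*}_{\bar J V}U+\bar\nabla^{*t}_U\bar J V,\qquad D_U V=-\bar A^{*}_{V}U+\bar\nabla^{*t}_U V.
\end{equation*}
Pairing these against $\bar J N$ and using the orthogonality relations of the decomposition $T\bar M|_M=(Rad(TM)\oplus ltr(TM))\perp S(TM)\perp S(TM^{\perp})$ should collapse the $\bar\nabla^{*t}$ contributions, leaving exactly a linear combination of $\bar g(\bar A^{*}_{\bar J V}U,\bar J N)$ and $\bar g(\bar A^{*}_{V}U,\bar J N)$. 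Setting the whole expression equal to zero for every $N\in\Gamma(ltr(TM))$ and invoking non-degeneracy of the duality between $Rad(TM)$ and $ltr(TM)$ then delivers $\bar A^{*}_{\bar J V}U=-p\,\bar A^{*}_{V}U$.

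The main obstacle is the careful bookkeeping of the $\pi$-correction terms generated by the non-metric nature of $\bar D$, namely $p\pi(V)\bar J U+q\pi(V)U-\pi(\bar J V)\bar J U$ from (\ref{a2}). These should either cancel by symmetry or vanish under the natural assumption that the characteristic vector field $\eta$ is tangent to $M$, so that $\pi(V)=\bar g(V,\eta)=0$ for $V\in\Gamma(Rad(TM))$ and, by invariance, also $\pi(\bar J V)=0$; only after this cleanup can the remaining expression be unambiguously identified with the stated shape operator relation.
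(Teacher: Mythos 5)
Your overall strategy---insert $\bar{J}$'s via $qI=\bar{J}^{2}-p\bar{J}$, commute $\bar{J}$ past $\bar{D}_U$ with (\ref{a2}), and expand with (\ref{q18}) to surface the screen shape operators---is exactly the paper's. But there is a genuine gap in the choice of test vector, and it kills the final step. Membership $D_UV\in\Gamma(Rad(TM))$ is equivalent to the vanishing of the $S(TM)$-component of $D_UV$, i.e.\ to $g(D_UV,Z)=0$ for all $Z\in\Gamma(S(TM))$; pairing against $N\in\Gamma(ltr(TM))$ instead tests the $Rad(TM)$-component (it characterizes $D_UV\in\Gamma(S(TM))$, the wrong condition). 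Worse, since $\bar{A}^{*}_{\bar{J}V}U$ and $\bar{A}^{*}_{V}U$ take values in $\Gamma(S(TM))$ by (\ref{q18}), and $S(TM)\perp ltr(TM)$ while $\bar{J}N\in\Gamma(ltr(TM))$ by invariance, every term $\bar{g}(\bar{A}^{*}_{\cdot}U,\bar{J}N)$ vanishes identically: your final identity is $0=0$ and yields no information. The ``non-degeneracy of the duality between $Rad(TM)$ and $ltr(TM)$'' you invoke is the wrong pairing here, because the vectors you need to detect live in $S(TM)$, not in $Rad(TM)$. (The paper's printed proof also writes $N$, but its theorem statement and the parallel integrability theorem make clear the intended test vector is $Z\in\Gamma(S(TM))$.)

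Replacing $N$ by $Z\in\Gamma(S(TM))$ repairs everything and simultaneously removes your main worry: the $\pi$-correction terms $p\pi(V)\bar{J}U+q\pi(V)U-\pi(\bar{J}V)\bar{J}U$ lie in $\Gamma(Rad(TM))$ (as $U\in\Gamma(Rad(TM))$ and $Rad(TM)$ is invariant), hence are automatically orthogonal to $\bar{J}Z\in\Gamma(S(TM))$ and to $Z$---no hypothesis that $\eta$ is tangent to $M$ is needed, and adding one would weaken the theorem. Likewise $\bar{\nabla}^{*t}_U\bar{J}V,\bar{\nabla}^{*t}_UV\in\Gamma(Rad(TM))$ are orthogonal to $\bar{J}Z$, so only $-\bar{g}(\bar{A}^{*}_{\bar{J}V}U,\bar{J}Z)+p\,\bar{g}(\bar{A}^{*}_{V}U,\bar{J}Z)$ survives, and the conclusion follows from the non-degeneracy of $g$ on $S(TM)$ together with $\bar{J}S(TM)=S(TM)$.
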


	\begin{theorem}
		Let $M$ be an invariant lightlike submanifold of metallic semi-Riemannian manifold $\bar{M}$ with a quarter symmetric non-metric connection $\bar{D}$.Then the screen distribution defines a totally geodesic foliation  iff\\
		\[\bar h^{*}(U,\bar{J}V)=p \bar h^{*}(U,V)\] 
		for any $U,V\in\Gamma(S(TM)), N\in\Gamma(ltr(TM))$.
		
		\begin{proof}
			$S(TM)$ defines totally geodesic foliation iff $D_U V\in\Gamma(S(TM))$\\
			
			$g(D_U V,N)=\frac{1}{q}[\bar{g}(\bar D_U \bar{J}V+p\pi(V)\bar{J}U+q\pi(V)U-\pi(\bar{J}V)\bar{J}U,\bar{J}N)]-\frac{p}{q}\bar{g}(\bar D_U V,\bar{J}N)$\\
			
			Following (\ref{q1}) and (\ref{q17}) we have\\
			
			\qquad$=\frac{1}{q}\bar{g}(D_U \bar{J}V,\bar{J}N)-\frac{p}{q}\bar{g}(D_U V,\bar{J}N)+\frac{1}{q}\bar{g}(\bar h^{l}(U,\bar{J}V),\bar{J}N)-\frac{p}{q}\bar{g}(\bar h^{l}(U,V),\bar{J}N)+$\\
			
			\qquad$\frac{1}{q}\bar{g}(\bar h^{s}(U,\bar{J}V),\bar{J}N)-\frac{p}{q}\bar{g}(\bar h^{s}(U,V),\bar{J}N)$\\
			
			\qquad$=\frac{1}{q}\bar{g}(D_U^{*}\bar{J}V,\bar{J}N)+\frac{1}{q}\bar{g}(\bar h^{*}(U,\bar{J}V),\bar{J}N)-\frac{p}{q}\bar{g}(D_U^{*}V,\bar{J}N)-\frac{p}{q}\bar{g}(\bar h^{*}(U,V),\bar{J}N)$\\
			
			\qquad$=\frac{1}{q}[\bar{g}(\bar h^{*}(U,\bar{J}V)-p\bar h^{*}(U,V),\bar{J}N)]=0$\\
			
			Therefore, the assertion follows from the hypothesis.
		\end{proof}
	\end{theorem}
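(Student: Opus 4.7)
The plan is to characterize the totally-geodesic foliation condition for $S(TM)$ via the transversal criterion $g(D_UV,N)=0$ for all $U,V\in\Gamma(S(TM))$ and all $N\in\Gamma(ltr(TM))$, and then to massage this inner product until only the mixed screen second fundamental form $\bar h^{*}$ survives. The invariance of $ltr(TM)$ under $\bar J$ (the Proposition from Section 3) is what makes this strategy viable, since it allows us to freely turn $N$ into $\bar J N$ and stay inside $ltr(TM)$.

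First I would use the $\bar J$-compatibility identity (\ref{c}) applied to the pair $(D_UV,N)$ in order to replace $\bar g(D_UV,N)$ by $\tfrac{1}{q}\bigl[\bar g(\bar J D_UV,\bar JN)-p\,\bar g(\bar J D_UV,N)\bigr]$, and then move $\bar J$ across $\bar D_U$ using (\ref{a2}). Because $U,V\in\Gamma(S(TM))$ and the metric $\bar g$ pairs $S(TM)$ with $ltr(TM)$ cleanly, the torsion-type correction terms $p\pi(V)\bar g(\bar JU,\bar JN)$, $q\pi(V)\bar g(U,\bar JN)$ and $\pi(\bar JV)\bar g(\bar JU,\bar JN)$ that appear when commuting $\bar D$ and $\bar J$ will need to be accounted for; since $\bar JU,\bar JN\in\Gamma(TM)\oplus\Gamma(ltr(TM))$ one expects them to either vanish against the appropriate projection or to be absorbed using the rewriting already performed in Theorem 3.2 of the excerpt.

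Next I would substitute the quarter-symmetric Gauss decomposition (\ref{q1}) for $\bar D_U\bar JV$ and $\bar D_UV$, and then apply the screen Gauss formula (\ref{q17}) to the tangential pieces $D_U\bar JV$ and $D_UV$. At this stage the $\bar h^{l}$ and $\bar h^{s}$ terms pair against $\bar JN\in\Gamma(ltr(TM))$ and drop out by the standard orthogonality $\bar g(ltr(TM),ltr(TM))=0$ and $\bar g(ltr(TM),S(TM^{\perp}))=0$, while the $D^{*}$ pieces also kill $\bar JN$ because $D^{*}_UW\in\Gamma(S(TM))$. What remains collapses to $\tfrac{1}{q}\,\bar g\bigl(\bar h^{*}(U,\bar JV)-p\,\bar h^{*}(U,V),\,\bar JN\bigr)$.

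The final step is to note that $\bar h^{*}$ takes values in $\Gamma(Rad\,TM)$ and that the restricted pairing $\bar g(\cdot,\bar JN)$ is non-degenerate on $Rad\,TM$ (using $\bar g(\xi_i,N_j)=\delta_{ij}$ together with the invariance of $ltr(TM)$ under $\bar J$). Thus the vanishing of $g(D_UV,N)$ for all $N$ is equivalent to $\bar h^{*}(U,\bar JV)=p\,\bar h^{*}(U,V)$, which gives the claimed characterization. The main obstacle I anticipate is bookkeeping the $\pi$-correction terms produced by (\ref{a2}) and showing that each of them is either orthogonal to $\bar JN$ or is cancelled by its twin on the $-p\,\bar g(\bar D_UV,\bar JN)$ side; the rest is a routine application of the Gauss-type formulas established in Section 2.
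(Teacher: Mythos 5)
Your proposal is correct and follows essentially the same route as the paper: test $D_UV\in\Gamma(S(TM))$ against $N\in\Gamma(ltr(TM))$, rewrite $\bar g(D_UV,N)$ via the compatibility identity (\ref{c}) and the commutation formula (\ref{a2}), substitute (\ref{q1}) and (\ref{q17}), and let orthogonality kill everything except $\bar g(\bar h^{*}(U,\bar JV)-p\bar h^{*}(U,V),\bar JN)$. You are in fact slightly more explicit than the paper about why the $\pi$-correction terms vanish (since $\bar JU, U\in\Gamma(S(TM))$ pair trivially with $\bar JN\in\Gamma(ltr(TM))$) and about the non-degeneracy of the $Rad(TM)$--$ltr(TM)$ pairing needed for the final equivalence.
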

\vspace{1in}

\section{SCREEN SEMI-INVARIANT LIGHTLIKE SUBMANIFOLDS WITH QUARTER SYMMETRIC NON-METRIC CONNECTION}\label{sec4}\

The structure of a screen semi-invariant submanifold has been expounded upon with an example  and further analysed for the metallic semi-Riemannian manifold with a quarter symmetric non-metric connection. 

\begin{definition}\label{d1}
	Let $M$ be a lightlike submanifold of a metallic semi-Riemannian manifold $\bar{M}$.Then  $M$ is said to be a screen semi-invariant lightlike submanifold of $\bar{M}$ if the following conditions are satisfied:\\
	(1) There exists a non-null distribution $B\subseteq{S(TM)}$ such that\\
	\begin{equation}\label{S1}
		S(TM)=B\oplus{B^{\perp}},\bar{J}(B)=B,\bar{J}(B^{\perp})\subseteq{S(TM^{\perp})},B\cap{B^{\perp}}=\{0\}
	\end{equation}\\
	where $B^{\perp}$ is orthogonal complementary to $B$ in $S(TM)$.\\
	
	(2) $Rad(TM)$ is invariant with respect to $\bar{J}$ i.e. $\bar{J}(Rad(TM))=Rad(TM)$.\\
	Then 
	\begin{equation}\label{S2}
		\bar{J}(ltr(TM))=ltr(TM)
	\end{equation}
	\begin{equation}\label{S3}
		TM=B^{'}\oplus{B^{\perp}},\quad B^{'}=B\perp{Rad(TM)}
	\end{equation}
	It follows that $B^{'}$ is also invariant with respect to $\bar{J}$.Thus the orthogonal complement to $\bar{J}(B^{\perp})$ in $S(TM^{\perp})$ is indicated by  $B_{o}$. Then we obtain\\
	\begin{equation}\label{S4}
		tr(TM)=ltr(TM)\perp{\bar{J}(B^{\perp})}\perp{B_{o}}
	\end{equation}
\end{definition}

\begin{proposition}
	Let $M$ be a screen semi-invariant lightlike submanifold of a metallic semi-Riemannian manifold $(\bar{M},\bar{g},\bar{J})$. Then, $M$ is an invariant lightlike submanifold of $\bar{M}$ if and only if $B^{\perp}=\{0\}$.
\end{proposition}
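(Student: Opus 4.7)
The plan is to unfold Definition \ref{d1} and the definition of invariant lightlike submanifold from Section 3 side by side: the two notions differ only in the fate of the subbundle $B^{\perp}$, so the equivalence should follow almost immediately from tracking where $\bar{J}(B^{\perp})$ lands, plus one small application of the metallic relation to secure injectivity of $\bar{J}$.

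For the forward direction I would assume $B^{\perp}=\{0\}$. Then (\ref{S1}) collapses to $S(TM)=B$ with $\bar{J}(B)=B$, so $\bar{J}(S(TM))=S(TM)$. Combined with $\bar{J}(Rad(TM))=Rad(TM)$, which is already built into the screen semi-invariant hypothesis via Definition \ref{d1}(2), $M$ satisfies the invariance conditions of Section 3 verbatim, and is thus an invariant lightlike submanifold of $\bar{M}$.

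For the converse, I would assume $M$ is invariant, so in particular $\bar{J}(S(TM))\subseteq S(TM)$. Picking any $V\in B^{\perp}\subseteq S(TM)$, invariance forces $\bar{J}V\in S(TM)$, while clause (\ref{S1}) of Definition \ref{d1} simultaneously places $\bar{J}V\in\bar{J}(B^{\perp})\subseteq S(TM^{\perp})$. Since $S(TM)$ and $S(TM^{\perp})$ are orthogonal complements inside their direct sum and so meet trivially, one concludes $\bar{J}V=0$.

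The main — and essentially only — obstacle is to deduce $V=0$ from $\bar{J}V=0$. This is where the metallic structure itself enters: applying $\bar{J}$ once more and invoking (\ref{a}) gives $0=\bar{J}^{2}V=p\bar{J}V+qV=qV$, and since the metallic parameter $q$ is a positive integer this forces $V=0$. Hence $B^{\perp}=\{0\}$, closing the equivalence.
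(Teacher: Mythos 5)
Your argument is correct and follows the same route as the paper's (very terse) proof, namely a direct unfolding of the two definitions. In fact your write-up supplies the one genuinely needed detail that the paper leaves implicit: that $\bar{J}V=0$ forces $V=0$, which you correctly extract from the metallic relation $\bar{J}^{2}=p\bar{J}+qI$ with $q>0$.
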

\begin{proof}
	Since $M$ is a invariant lightlike submanifold of $\bar{M}$, therefore $\bar{J}(TM)=(TM)$ .Hence $B^{\perp}=\{0\}$. Similarly the converse hold. 
\end{proof}

\begin{proposition}
	If a screen semi-invariant lightlike submanifold of a metallic semi-Riemannian manifold $\bar{M}$ is a isotropic or totally lightlike , then it is an invariant lightlike submanifold of $\bar{M}$.
\end{proposition}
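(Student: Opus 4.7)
The plan is to deduce this directly from the preceding proposition, which characterises invariance of a screen semi-invariant lightlike submanifold by the condition $B^{\perp}=\{0\}$. Thus it suffices, in both cases, to verify that the anti-invariant piece $B^{\perp}$ of the screen distribution is trivial.

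First I would recall the relevant definitions from standard lightlike geometry: an \emph{isotropic} lightlike submanifold is one for which $S(TM)=\{0\}$ (so that $TM=Rad(TM)$) while $S(TM^{\perp})$ may be non-trivial, and a \emph{totally lightlike} submanifold satisfies both $S(TM)=\{0\}$ and $S(TM^{\perp})=\{0\}$. In either situation one has $S(TM)=\{0\}$.

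Next, invoking condition (1) of Definition \ref{d1}, the screen distribution decomposes as $S(TM)=B\oplus B^{\perp}$ with $B\cap B^{\perp}=\{0\}$. Since $S(TM)=\{0\}$ forces $B=\{0\}$ and $B^{\perp}=\{0\}$, the anti-invariant part vanishes in particular. Applying the preceding proposition with $B^{\perp}=\{0\}$ yields immediately that $M$ is an invariant lightlike submanifold of $\bar{M}$, and the statement follows.

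The proof is essentially a definitional unwinding, so there is no genuine analytic obstacle; the only point requiring mild care is verifying that the screen semi-invariant setup of Definition \ref{d1} remains consistent when the screen distribution degenerates to zero, i.e.\ that one may legitimately take $B=B^{\perp}=\{0\}$ and $\bar{J}(B^{\perp})\subseteq S(TM^{\perp})$ holds vacuously, together with $\bar{J}(Rad(TM))=Rad(TM)$. Granted this, the result is an immediate corollary of the previous proposition and no further computation involving the quarter symmetric non-metric connection $\bar{D}$ or the metallic relation $\bar{J}^{2}=p\bar{J}+qI$ is required.
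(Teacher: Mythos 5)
Your argument is correct: for an isotropic or totally lightlike submanifold one has $S(TM)=\{0\}$, hence $B^{\perp}\subseteq S(TM)$ forces $B^{\perp}=\{0\}$, and the preceding proposition then gives invariance (equivalently, $TM=Rad(TM)$ is already $\bar{J}$-invariant by condition (2) of Definition \ref{d1}, and $\bar{J}S(TM)=S(TM)$ holds vacuously). The paper states this proposition without any proof, so there is nothing to compare against, but your reduction to the $B^{\perp}=\{0\}$ criterion is exactly the intended and natural route.
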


\begin{lemma}
	Let $M$ be a screen semi-invariant lightlike submanifold of a metallic semi-Riemannian manifold $\bar{M}$. Then\\
	
	$f^{2}U=pfU+qU-BwU$,\quad $pwU-CwU=wfU$\\
	
	$fBN=pBN-BCN$, \quad $C^{2}N=pCN+qN-wBN$\\
	
	$g(fU,V)-g(U,fV)=g(U,wV)-g(wU,V)$\\
	
	$g(fU,fV)=pg(fU,V)+qg(U,V)+pg(wU,V)-g(fU,wV)-g(wU,fV)-g(wU,wV)$\\
	
	for any $U,V\in\Gamma(TM)$ and $ N\in\Gamma(tr(TM))$.
\end{lemma}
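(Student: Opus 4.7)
The plan is to derive all six identities by systematically exploiting the two structural equations of the metallic semi-Riemannian manifold, namely $\bar{J}^{2}=p\bar{J}+qI$ from \eqref{a} and the $\bar{J}$-compatibility \eqref{b}, \eqref{c}, combined with the tangent/transversal decompositions \eqref{t1} and \eqref{t2}. The first two pairs follow from applying $\bar{J}$ twice, and the last two from polarizing the compatibility identity.

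For the first pair, take any $U\in\Gamma(TM)$ and compute $\bar{J}(\bar{J}U)$ in two ways. On one side, $\bar{J}^{2}U = p\bar{J}U + qU = pfU + pwU + qU$, splitting into a tangential piece $pfU+qU$ and a transversal piece $pwU$. On the other side, using \eqref{t1} and \eqref{t2} (applied to $wU\in\Gamma(tr(TM))$), $\bar{J}(\bar{J}U)=\bar{J}(fU)+\bar{J}(wU)=(f^{2}U+wfU)+(BwU+CwU)$, whose tangential part is $f^{2}U+BwU$ and transversal part is $wfU+CwU$. Equating tangential and transversal components yields $f^{2}U=pfU+qU-BwU$ and $wfU=pwU-CwU$. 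For the second pair, repeat the same argument with $N\in\Gamma(tr(TM))$: expand $\bar{J}^{2}N=pBN+pCN+qN$ and $\bar{J}(\bar{J}N)=(fBN+wBN)+(BCN+C^{2}N)$, and match the tangential and transversal components to obtain $fBN=pBN-BCN$ and $C^{2}N=pCN+qN-wBN$.

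For the metric identities, start from \eqref{b}: $\bar{g}(U,\bar{J}V)=\bar{g}(\bar{J}U,V)$ for $U,V\in\Gamma(TM)$. Substituting \eqref{t1} on both sides gives
\begin{equation*}
g(U,fV)+\bar{g}(U,wV)=g(fU,V)+\bar{g}(wU,V),
\end{equation*}
which is precisely identity (3) after rearrangement (here $\bar{g}$ between tangential and transversal parts need not vanish because of the lightlike pairing). For identity (4), polarize \eqref{c}: $\bar{g}(\bar{J}U,\bar{J}V)=p\bar{g}(U,\bar{J}V)+q\bar{g}(U,V)$. Expanding the left side with \eqref{t1} gives $g(fU,fV)+\bar{g}(fU,wV)+\bar{g}(wU,fV)+\bar{g}(wU,wV)$, while the right side becomes $p\,g(U,fV)+p\,\bar{g}(U,wV)+q\,g(U,V)$. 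Replacing $p\,g(U,fV)+p\,\bar{g}(U,wV)$ by $p\,g(fU,V)+p\,\bar{g}(wU,V)$ via identity (3) and solving for $g(fU,fV)$ gives the claimed formula.

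The only mild obstacle is bookkeeping: one must resist the temptation to set cross-terms like $\bar{g}(U,wV)$ to zero, since in the lightlike setting a radical vector pairs nontrivially with the lightlike transversal bundle. Once tangential and transversal components are separated cleanly in each expansion, the identities are immediate consequences of $\bar{J}^{2}=p\bar{J}+qI$ and the $\bar{J}$-compatibility of $\bar{g}$.
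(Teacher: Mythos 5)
Your proposal is correct and follows essentially the same route as the paper's own proof: apply $\bar{J}$ to the decompositions \eqref{t1} and \eqref{t2}, use $\bar{J}^{2}=p\bar{J}+qI$, and separate tangential from transversal parts for the first four identities, then expand \eqref{b} and \eqref{c} for the metric ones. Your added remark about not discarding the cross-terms $\bar{g}(U,wV)$ in the lightlike setting is a sensible clarification but does not change the argument.
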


\begin{proof}
	
	$\bar{J}U=fU+wU$ for $U\in\Gamma(TM)$\\
	
	Applying $\bar{J}$ on both sides and using $(\ref{a})$, we derive\\
	
	$pfU+pwU+qU=f^{2}U+ wfU + BwU + CwU$\\
	
	Taking tangential and transversal parts of the above equation,\\
	
	$f^{2}U=pfU+qU-BwU$ and $pwU-CwU=wfU$\\
	
	The application of $\bar{J}$ on $(\ref{t2})$ along with equation $(\ref{a})$ and then on the comparison of  tangential and transversal parts, we obtain\\
	
	$fBN=pBN-BCN$ and $C^{2}N=pCN+qN-wBN$\\
	
	Using equations $(\ref{a})$,$(\ref{b})$,$(\ref{c})$, we get\\
	
	$g(fU,V)-g(U,fV)=g(U,wV)-g(wU,V)$\\
	
	$g(fU,fV)=pg(fU,V)+qg(U,V)+pg(wU,V)-g(fU,wV)-g(wU,fV)-g(wU,wV)$
	
\end{proof}
\begin{theorem}
	Let $M$ be a screen semi-invariant lightlike submanifold of a metallic semi-Riemannian manifold $\bar{M}$. Then, $f$ is a metallic structure on $B^{'}$.
\end{theorem}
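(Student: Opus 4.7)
The plan is to apply the preceding lemma directly, using the invariance of $B'$ under $\bar{J}$ recorded in Definition \ref{d1}. The key observation is that for any $U\in\Gamma(B')$ the transversal component $wU$ vanishes, so the polynomial identity of the lemma collapses exactly to the metallic structure equation (\ref{a}).

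First, I would combine equation (\ref{S3}) with the invariance statements $\bar{J}(B)=B$ and $\bar{J}(Rad(TM))=Rad(TM)$ to conclude that $B'=B\perp Rad(TM)$ is itself $\bar{J}$-invariant. Hence for every $U\in\Gamma(B')$ we have $\bar{J}U\in\Gamma(B')\subseteq\Gamma(TM)$. Appealing to the uniqueness of the decomposition $\bar{J}U=fU+wU$ with $fU\in\Gamma(TM)$ and $wU\in\Gamma(tr(TM))$, this forces $wU=0$ and $fU=\bar{J}U\in\Gamma(B')$. In particular, $f$ restricts to a $(1,1)$-tensor field on $B'$.

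Next, substituting $wU=0$ into the identity $f^{2}U=pfU+qU-BwU$ from the preceding lemma yields
\begin{equation*}
f^{2}U=pfU+qU\qquad\text{for every }U\in\Gamma(B'),
\end{equation*}
which is precisely the defining polynomial relation (\ref{a}) for a metallic structure, now restricted to $B'$. Therefore $f$ is a metallic structure on $B'$.

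The argument is essentially immediate once the lemma is in hand, so there is no substantive obstacle. The only point that deserves explicit care is verifying that $f$ maps $B'$ into $B'$ rather than merely into $TM$; this is where the invariance of $B'$ combined with the uniqueness of the tangential–transversal splitting is doing the actual work.
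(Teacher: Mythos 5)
Your proposal is correct and follows essentially the same route as the paper: the paper's proof likewise observes that $wU=0$ for $U\in\Gamma(B^{'})$ and then invokes the preceding lemma, though it leaves implicit the points you spell out (the $\bar{J}$-invariance of $B^{'}$ forcing $wU=0$ via the tangential--transversal splitting, and $f$ mapping $B^{'}$ into itself). Your version is simply a more carefully justified rendering of the same argument.
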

\begin{proof}
	For a screen semi-invariant lightlike submanifold, we have $wU=0$ for any $U\in\Gamma(B^{'})$. Then it follows from the above lemma that $f$ become a metallic structure on $B^{'}$.
\end{proof}

Inspired by \cite{GSRM9}, we present the following example:
\begin{example}
	Consider an $9$-dimensional semi-Euclidean space $(\bar{M}=\mathbb{R}_{2}^{9},\bar{g})$ with signature $(-,-,+,+,+,+,+,+,+)$. Let $(z_1,z_2,z_3,z_4,z_5,z_6,z_7,z_8,z_9)$ be the standard coordinate system of $\bar{M}$.Then by setting   
	\begin{equation*}
		\bar{J}(z_1,z_2,z_3,z_4,z_5,z_6,z_7,z_8,z_9)=((\sigma z_1,(p-\sigma){z_2},\sigma{z_3},\sigma{z_4},\sigma{z_5},\sigma{z_6},\sigma{z_7},\sigma{z_8},(p-\sigma){z_9})
	\end{equation*} 
	we have $\bar{J}^{2}=p\bar{J}+qI$ which shows that $\bar{J}$ is a metallic structure on $\bar{M}$.\\
	
	Consider a submanifold $M$ of $\mathbb{R}_{2}^{9}$ is given by the equations\\  
	
	\begin{equation*}
		z_1=-y_2+y_1,\quad z_2= \frac{\sigma}{\sqrt{q}} y_4
	\end{equation*}
	\begin{equation*}
		z_3=y_3+y_5,  \quad   z_4=y_2+y_3
	\end{equation*}
	\begin{equation*}
		z_5=y_4,\quad z_6=y_3-y_5
	\end{equation*}
	\begin{equation*}
		z_7=y_2+y_1,\quad z_8= -y_2+y_3,\quad z_9=0
	\end{equation*}
	
	Here $TM$ is spanned by $\{D_1,D_2,D_3,D_4,D_5\}$, where\\
	\begin{equation*}
		D_1=-\partial{z_1}+\partial{z_4}+\partial{z_7}-\partial{z_8},
		\quad D_2=\partial{z_3}+\partial{z_4}+\partial{z_6}+\partial{z_8}
	\end{equation*}
	\begin{equation*}
		D_3=\partial{z_1}+\partial{z_7}, \quad D_4=\partial{z_3}-\partial{z_6},\quad D_5=\frac{\sigma}{\sqrt{q}} \partial{z_2}+\partial{z_5}
	\end{equation*}
	
	We see that $M$ is a $2$-lightlike submanifold with $Rad(TM)$=Span $\{D_1,D_2 \}$. Further, $S(TM)$ and $S(TM^{\perp})$ are spanned by $\{D_3,D_4,D_5\}$ and $\{W\}$ respectively, where\\
	
	\begin{equation*}
		W=-\sqrt{q}{\partial{z_2}}+\sigma \partial{z_5}\\
	\end{equation*}
	
	The lightlike transversal vector bundle $ltr(TM)$ is spanned by\\
	\begin{equation*}
		N_1=\frac{1}{4}(-\partial{z_1}-\partial{z_4}+\partial{z_7}+\partial{z_8})
	\end{equation*}
	\begin{equation*}
		N_2=\frac{1}{4}(\partial{z_3}-\partial{z_4}+\partial{z_6}-\partial{z_8})
	\end{equation*}\\
	Hence, $B=Span\{D_3,D_4\}$, $B^{\perp}=Span\{D_5\}$, $B_{o}=\{0\}$ and $B^{'}=Span\{D_1,D_2,D_3,D_4\}$.\\
	
	Also, $\bar{J}(Rad(TM))=Rad(TM)$, $\bar{J}(B)=B$, $\bar{J}(ltr(TM))=ltr(TM)$ and $\bar{J}(B^{\perp})=S(TM^{\perp})$. 
	
	Therefore, $M$ becomes a screen semi-invariant lightlike submanifold of the metallic semi-Riemannian manifold $\bar{M}$. 
	
\end{example}

\begin{lemma}
	Let $(M,g,S(TM),S(TM^{\perp}))$ be a screen semi-invariant lightlike submanifold of the metallic semi-Riemannian manifold $(\bar{M},\bar{g},\bar{J})$ with a quarter symmetric non-metric connection $\bar{D}$.Then
	\begin{equation*}
		g(\bar{h}(U,V),\xi)=g(\bar{A}_{\xi}^{*}U,V)+ \pi(\xi)g(JfU,V),
	\end{equation*}
	for any $\xi\in\Gamma(Rad(TM))$,\quad $U\in\Gamma(B)$,\quad $V\in\Gamma(B^{'})$.
\end{lemma}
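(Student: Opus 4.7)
The plan is to express $g(\bar{h}(U,V),\xi)$ as $\bar{g}(\bar{D}_U V,\xi)$ via the Gauss formula (\ref{q1}), exploit the vanishing $\bar{g}(X,\xi)=0$ for any $X\in\Gamma(TM)$ (since $\xi\in Rad(TM)\subset TM^{\perp}$), and then differentiate the identity $\bar{g}(V,\xi)=0$ using the non-metricity of $\bar{D}$ to transfer the derivative onto $\xi$. The final piece is to decompose $\bar{D}_U\xi$ via (\ref{q18}) and the Gauss formula, showing that most of the resulting terms pair trivially with $V\in\Gamma(B')$.

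First I would record that since $B\subset S(TM)$, $Rad(TM)$ is lightlike, and $B'=B\perp Rad(TM)$, any $V\in\Gamma(B')$ satisfies $\bar{g}(V,\xi)=0$. Differentiating and inserting $(\bar{D}_U\bar{g})(V,\xi)=-\pi(V)\bar{g}(\bar{J}U,\xi)-\pi(\xi)\bar{g}(\bar{J}U,V)$ yields
\[
0=(\bar{D}_U\bar{g})(V,\xi)+\bar{g}(\bar{D}_UV,\xi)+\bar{g}(V,\bar{D}_U\xi).
\]
The hypothesis $U\in\Gamma(B)$ is what makes the right-hand side collapse: by invariance of $B$, $\bar{J}U=fU\in B\subset S(TM)$, so $w_lU=w_sU=0$, $\bar{g}(\bar{J}U,\xi)=0$, and $fU=JfU$. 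Hence $(\bar{D}_U\bar{g})(V,\xi)=-\pi(\xi)g(JfU,V)$, and the equation above reads
\[
\bar{g}(\bar{D}_UV,\xi)=\pi(\xi)g(JfU,V)-\bar{g}(V,\bar{D}_U\xi).
\]

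Next I would expand $\bar{D}_U\xi=D_U\xi+\bar{h}^l(U,\xi)+\bar{h}^s(U,\xi)=-\bar{A}^*_\xi U+\bar{\nabla}^{*t}_U\xi+\bar{h}^l(U,\xi)+\bar{h}^s(U,\xi)$ using (\ref{q18}) and the Gauss formula, and pair with $V$. The $\bar{A}^*_\xi U$ piece contributes $-g(\bar{A}^*_\xi U,V)$; the term $\bar{\nabla}^{*t}_U\xi\in Rad(TM)$ drops out since $V\in B'$ pairs trivially with $Rad(TM)$ (the $B$-piece by $B\perp Rad$, the $Rad$-piece by the lightlike property); and $\bar{h}^s(U,\xi)\in S(TM^{\perp})$ drops out since $S(TM^{\perp})\perp TM$. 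The term $\bar{h}^l(U,\xi)\in ltr(TM)$ pairs trivially with the $B$-component of $V$ using $S(TM)\perp ltr(TM)$. Substituting back, I obtain $\bar{g}(V,\bar{D}_U\xi)=-g(\bar{A}^*_\xi U,V)$, which combined with the previous equation and $\bar{g}(\bar{h}(U,V),\xi)=\bar{g}(\bar{D}_UV,\xi)$ (the $D_UV$ part is tangential and so annihilated by $\xi$) delivers the claim.

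The main obstacle is the bookkeeping for the $\bar{h}^l(U,\xi)$ coupling, since $ltr(TM)$ pairs non-trivially with $Rad(TM)$ in general; the argument above relies on treating $V$ through its $B$-component, where orthogonality of $S(TM)$ and $ltr(TM)$ makes the coupling vanish, which is the sense in which the statement for $V\in\Gamma(B')$ is compatible with the formula. Verifying that $fU=JfU$ and that each transversal component of $\bar{D}_U\xi$ lands in a subbundle orthogonal to $V$ are the only non-mechanical steps; everything else is substitution of (\ref{a1}), (\ref{q1}), (\ref{q18}), and (\ref{q21}).
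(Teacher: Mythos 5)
Your route is genuinely different from the paper's. The paper's proof is a one-line chain of substitutions: it writes $\bar h(U,V)=h(U,V)+\pi(V)wU$ via (\ref{q5})--(\ref{q6}), kills $wU$ because $U\in\Gamma(B)$ and $B$ is $\bar J$-invariant, invokes the screen identity $\bar g(h^{l}(U,JV),\xi)=g(A^{*}_{\xi}U,JV)$ already recorded in the preliminaries, and finally converts $A^{*}_{\xi}U$ into $\bar A^{*}_{\xi}U+\pi(\xi)JfU$ by (\ref{q21}). You instead re-derive that screen identity from scratch in the quarter-symmetric setting, by differentiating $\bar g(V,\xi)=0$ against the non-metricity formula for $\bar D$ and decomposing $\bar D_{U}\xi$. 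That is a legitimate, self-contained alternative: it avoids leaning on the Levi-Civita identities of Section 2 at the cost of more bookkeeping, and your handling of the $\bar A^{*}_{\xi}U$, $\bar\nabla^{*t}_{U}\xi$ and $\bar h^{s}(U,\xi)$ terms, of the identification $JfU=fU$ for $U\in\Gamma(B)$, and of the sign coming from $(\bar D_{U}\bar g)(V,\xi)=-\pi(\xi)g(fU,V)$ is correct.

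The one point you do not close is the one you flag yourself: the term $\bar g(V,\bar h^{l}(U,\xi))$. Since $B'=B\perp Rad(TM)$, a general $V\in\Gamma(B')$ has a radical component $V_{Rad}$, and $\bar g(V_{Rad},\bar h^{l}(U,\xi))=\bar g(V_{Rad},h^{l}(U,\xi))$ need not vanish for distinct radical directions --- one only has the antisymmetry $\bar g(h^{l}(U,\xi),\xi')=-\bar g(h^{l}(U,\xi'),\xi)$, not vanishing. So as written your argument establishes the identity only for $V\in\Gamma(B)$, i.e.\ for the screen projection $JV$ of $V$; declaring the statement ``compatible'' with the formula on the $B$-component is not a proof for all of $\Gamma(B')$. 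You are in good company, since the paper's own proof applies $\bar g(h^{l}(U,JV),\xi)=g(A^{*}_{\xi}U,JV)$ as though $JV=V$ and thus carries the same implicit restriction; but to make your derivation cover all of $B'$ you must either prove $\bar g(h^{l}(U,\xi'),\xi)=0$ for the radical part or accept the statement with $JV$ in place of $V$.
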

\begin{proof}
	For any $\xi\in\Gamma(Rad(TM))$,\quad $U\in\Gamma(B)$,\quad $V\in\Gamma(B^{'})$\\
	
	$g(\bar{h}(U,V),\xi)= g(h(U,V)+\pi(V) wU,\xi)=g(A^{*}_{\xi}U,V)=g(\bar{A}_{\xi}^{*}U,V)+ \pi(\xi)g(JfU,V)$
\end{proof}
\begin{definition}
	Let $(M,g,S(TM),S(TM^{\perp}))$ be a screen semi-invariant submanifold of metallic semi-Riemannian manifold $(\bar{M},\bar{g},\bar{J})$. Then, the distribution $B$ is integrable if and only if $[U,V]\in\Gamma(B)$ for any $U,V\in\Gamma(B)$.
\end{definition}

\begin{theorem}
	Let $M$ be a screen semi-invariant lightlike submanifold of metallic semi-Riemannian manifold $\bar{M}$ with a quarter symmetric non-metric connection $\bar{D}$.Then the radical distribution is integrable  iff\\
	
	$(i) \bar A_{\bar{J}E}^{*}E^{'}+p\bar A_{E^{'}}^{*}E=\bar A_{\bar{J}E^{'}}^{*}E+p\bar A_{E}^{*}E^{'}$ for all $E,E^{'}\in\Gamma(Rad(TM)),  U\in\Gamma(B)$.\\
	
	$(ii)\bar h^{s}(E,\bar{J}E^{'})=\bar h^{s}(E^{'},\bar{J}E)$ for all $E,E^{'}\in\Gamma(Rad(TM)), Z\in\Gamma(B^{\perp})$.\\

	\begin{proof}
		$Rad(TM)$is integrable iff 
		\begin{equation}{\label{th1}}
			g([E,E^{'}],Z)=0,\quad g([E,E^{'}],U)=0
		\end{equation} 
		for any $E,E^{'}\in\Gamma(Rad(TM)),Z\in\Gamma(B^{\perp}),U\in\Gamma(B)$\\
		
		From equations (\ref{th1}), (\ref{a1}), (\ref{a2}), (\ref{c}), (\ref{q1}) and (\ref{q18}), we get\\
		
		$0=\frac{1}{q}[\bar{g}(\bar{D}_E \bar{J}E^{'}+p\pi(E^{'})\bar{J}E+q\pi(E^{'})E-\pi(\bar{J}E^{'})\bar{J}E,\bar{J}U)-p\bar{g}(\bar{D}_E E^{'},\bar{J}U)]-$\\
		
		\qquad $\frac{1}{q}[\bar{g}(\bar{D}_{E^{'}} \bar{J}E+p\pi(E)\bar{J}E^{'}+q\pi(E)E^{'}-\pi(\bar{J}E)\bar{J}E^{'},\bar{J}U)-p\bar{g}(\bar{D}_{E^{'}} E,\bar{J}U)]$\\
		
		\qquad $=\frac{1}{q}[\bar{g}(-\bar A_{\bar{J}E^{'}}^{*}E,\bar{J}U)+\bar{g}(\bar{\nabla}_E ^{*t}\bar{J}E^{'},\bar{J}U)-p\bar{g}(-\bar A_{E^{'}}^{*}E,\bar{J}U)-p\bar{g}(\bar{\nabla}_E ^{*t}E^{'},\bar{J}U)]-$\\
		
		\qquad $\frac{1}{q}[\bar{g}(-\bar A_{\bar{J}E}^{*}E^{'},\bar{J}U)+\bar{g}(\bar{\nabla}_{E^{'}} ^{*t} \bar{J}E,\bar{J}U)-p\bar{g}(-\bar A_{E}^{*}E^{'},\bar{J}U)-p\bar{g}(\bar{\nabla}_{E^{'}} ^{*t}E,\bar{J}U)]$\\
		
		\qquad $=\frac{1}{q}\bar{g}(\bar A_{\bar{J}E}^{*}E^{'}+p\bar A_{E^{'}}^{*}E-\bar A_{\bar{J}E^{'}}^{*}E-p\bar A_{E}^{*}E^{'},\bar{J}U)$\\
		
		Hence (i) follows.\\
		
		$g([E,E^{'}],Z)=\bar{g}(\bar D_E {E^{'}}-\pi(E^{'})\bar{J}E-\bar{D}_{E^{'}} E+\pi(E)\bar{J}E^{'},Z)$\\

		Using equations, (\ref{a1}), (\ref{a2}) and (\ref{q1}) \\ 
		
		$g([E,E^{'}],Z)=\frac{1}{q}\bar{g}(D_{E}{\bar{J}E^{'}},\bar{J}Z)+\frac{1}{q}\bar{g}(\bar{h^{l}}(E,\bar{J}E^{'}),\bar{J}Z)+\frac{1}{q}\bar{g}(\bar{h^{s}}(E,\bar{J}E^{'}),\bar{J}Z)-$\\
		
		\qquad$\frac{p}{q}\bar{g}(D_{E}E^{'},\bar{J}Z)-\frac{p}{q}\bar{g}(\bar{h^{l}}(E,E^{'}),\bar{J}Z)-\frac{p}{q}\bar{g}(\bar{h^{s}}(E,E^{'}),\bar{J}Z)-\frac{1}{q}\bar{g}(D_{E^{'}}{\bar{J}E},\bar{J}Z)-$\\
		
		\qquad $\frac{1}{q}\bar{g}(\bar{h^{l}}(E^{'},\bar{J}E),\bar{J}Z)-\frac{1}{q}\bar{g}(\bar{h^{s}}(E^{'},\bar{J}E),\bar{J}Z)+\frac{p}{q}\bar{g}(D_{E^{'}}{E},\bar{J}Z)+\frac{p}{q}\bar{g}(\bar{h^{l}}(E^{'},E),\bar{J}Z)+$\\
		
		\qquad $\frac{p}{q}\bar{g}(\bar{h^{s}}(E^{'},E),\bar{J}Z)$\\ 
		
		From equations,(\ref{S1}), (\ref{q6}) and (\ref{th1})\\
		
		$g([E,E^{'}],Z)=\frac{1}{q}[\bar{g}(\bar{h^{s}}(E,\bar{J}E^{'})-p\pi(E^{'})w_s E-\bar{h^{s}}(E^{'},\bar{J}E)+p\pi(E)w_s E^{'},\bar{J}Z)]=0$\\
		
		Using definition of screen semi-invariant submanifold, $w_s E=0,\quad w_s E^{'}=0$\\
		
		$\bar h^{s}(E,\bar{J}E^{'})=\bar h^{s}(E^{'},\bar{J}E)$ which proves $(ii)$.\\

	\end{proof}
\end{theorem}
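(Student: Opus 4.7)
The integrability of $Rad(TM)$ amounts to $[E,E'] \in \Gamma(Rad(TM))$ for all $E,E' \in \Gamma(Rad(TM))$. Since the restriction of $g$ to $S(TM) = B \oplus B^{\perp}$ is non-degenerate, this is equivalent to the two conditions $g([E,E'],U)=0$ for every $U \in \Gamma(B)$ and $g([E,E'],Z)=0$ for every $Z \in \Gamma(B^{\perp})$. My plan is to show that these two tests yield (i) and (ii) respectively.

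For part (i), I would first write $[E,E'] = \bar{D}_E E' - \bar{D}_{E'} E - \bar{T}^{\bar{D}}(E,E')$. The torsion $\pi(E')\bar{J}E - \pi(E)\bar{J}E'$ lies in $\bar{J}(Rad(TM)) = Rad(TM)$, which is orthogonal to $U \in B$, so this piece drops out. Next I apply the metallic compatibility $\bar{g}(X,U) = q^{-1}[\bar{g}(\bar{J}X,\bar{J}U) - p\,\bar{g}(X,\bar{J}U)]$ read off from (\ref{c}), and commute $\bar{J}$ past $\bar{D}_E$ via (\ref{a2}). The three extra terms from (\ref{a2}) involve $\bar{J}E$ and $E$, both orthogonal to $\bar{J}U \in B$, so they also die. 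What remains is proportional to $\bar{g}(\bar{D}_E \bar{J}E',\bar{J}U) - p\,\bar{g}(\bar{D}_E E',\bar{J}U)$. Since $\bar{J}E', E' \in \Gamma(Rad(TM))$, I decompose each $\bar{D}_E(\cdot)$ via the Gauss formula (\ref{q1}) and the radical splitting (\ref{q18}); against the $S(TM)$-valued test vector $\bar{J}U$ the only surviving piece is the $\bar{A}^{*}$-term. Antisymmetrizing in $E \leftrightarrow E'$ assembles precisely the combination appearing in (i).

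For part (ii) the same torsion cancellation works because $\bar{J}E,\bar{J}E' \in Rad(TM)$ is orthogonal to $Z \in B^{\perp}$. Metallic compatibility now produces a test vector $\bar{J}Z$ lying in $\bar{J}(B^{\perp}) \subset S(TM^{\perp})$ by Definition~\ref{d1}; the correction terms from (\ref{a2}) vanish because $S(TM^{\perp}) \perp TM$ kills pairings with $E$ and $\bar{J}E$. Decomposing $\bar{D}_E(\cdot)$ through (\ref{q1}) and pairing with $\bar{J}Z \in S(TM^{\perp})$ isolates the $\bar{h}^{s}$-component. The term multiplied by $p$ contributes $\bar{h}^{s}(E,E')$, which is symmetric in $E,E'$: because $Rad(TM)$ is $\bar{J}$-invariant we have $w_s E = 0$ on $Rad(TM)$, and (\ref{q6}) therefore reduces $\bar{h}^{s}$ to the symmetric Levi-Civita $h^{s}$ on $Rad(TM) \times Rad(TM)$. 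Antisymmetrizing annihilates this term and leaves $\bar{g}(\bar{h}^{s}(E,\bar{J}E') - \bar{h}^{s}(E',\bar{J}E),\bar{J}Z) = 0$, giving (ii).

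The main bookkeeping obstacle is the disciplined cancellation of the many correction terms: the quarter-symmetric torsion itself, the three extra terms produced each time $\bar{J}$ is commuted through $\bar{D}_E$ via (\ref{a2}), and the $\pi(\cdot)\,w_{\cdot}(\cdot)$ contributions to $\bar{h}^{l},\bar{h}^{s}$ from (\ref{q5})--(\ref{q6}). Each vanishes by a separate invariance argument rooted in the orthogonal decomposition $T\bar{M}|_M = (Rad(TM) \oplus ltr(TM)) \perp B \perp B^{\perp} \perp \bar{J}(B^{\perp}) \perp B_o$, together with the $\bar{J}$-invariance of $Rad(TM)$, $B$ and $ltr(TM)$, and the inclusion $\bar{J}(B^{\perp}) \subset S(TM^{\perp})$. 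Once these vanishings are laid out systematically, the remainder is a direct unwinding.
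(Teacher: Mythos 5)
Your proposal is correct and follows essentially the same route as the paper: both characterize integrability by testing $[E,E']$ against $B$ and $B^{\perp}$, convert via the metallic compatibility (\ref{c}) and the commutation identity (\ref{a2}), and then isolate the $\bar{A}^{*}$-term through (\ref{q18}) for part (i) and the $\bar{h}^{s}$-term through (\ref{q1}) together with $w_{s}E=0$ for part (ii). The cancellations you flag (torsion in $Rad(TM)$, correction terms from (\ref{a2}), symmetry of $h^{s}$ on the radical) are exactly the ones the paper uses, so no substantive difference remains.
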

\begin{theorem}
	Let $M$ be a screen semi-invariant lightlike submanifold of metallic semi-Riemannian manifold $\bar{M}$ with a quarter symmetric non-metric connection $\bar{D}$.The necessary and sufficient condition for $B$ to be integrable is that \\
	
	$\bar h^{s}(U,\bar{J}V)=\bar h^{s}(V,\bar{J}U)$ for any  $U,V\in\Gamma(B),Z\in\Gamma(B^{\perp})$.\\
	
	$\bar h^{*}(U,\bar{J}V)+p\bar h^{*}(V,U)=\bar h^{*}(V,\bar{J}U)+p\bar h^{*}(U,V)$ \\
	
	for $U,V\in\Gamma(B),N\in\Gamma(ltr(TM))$\\
	
	\begin{proof}
		$B$ is integrable iff
		\begin{equation}\label{th2}
			g([U,V],Z)=0, \quad g([U,V],N)=0 
		\end{equation}\\
		$g([U,V],Z)= \bar{g}(\bar D_U V-\pi(V)\bar{J}U-\bar D_V U+ \pi(U)\bar{J}V,Z)=\bar{g}(\bar D_U V,Z)-\bar{g}(\bar D_V U,Z)$\\
		
		Using equations (\ref{c}) and (\ref{a2})\\
		
		$g([U,V],Z)=\frac{1}{q}\bar{g}(\bar{J}\bar D_U V,\bar{J}Z)-\frac{p}{q}\bar{g}(\bar D_U V,\bar{J}Z)-\frac{1}{q}\bar{g}(\bar{J}\bar D_V U,\bar{J}Z)+\frac{p}{q}\bar{g}(\bar D_V U,\bar{J}Z)$\\
		
		Following equations (\ref{q1}), (\ref{th2}) and (\ref{q6})\\

		\qquad$\frac{1}{q}[\bar{g}(\bar h^{s}(U,\bar J{V})-p\pi(V)w_s U-\bar h^{s}(V,\bar J{U})+p\pi(U)w_s V,\bar{J}Z)]=0$\\
		
		The result follows from (\ref{S1}) and (\ref{t1}).\\

		Also, $g([U,V],N)=\frac{1}{q}[\bar{g}(\bar D_U \bar{J}V+p\pi(V)\bar{J}U+q\pi(V)U-\pi(\bar{J}V)\bar{J}U,\bar{J}N)-p\bar{g}(\bar D_U V,\bar{J}N)]-$\\
		
		\qquad $\frac{1}{q}[\bar{g}(\bar D_V \bar{J}U+p\pi(U)\bar{J}V+q\pi(U)V-\pi(\bar{J}U)\bar{J}V,\bar{J}N)-p\bar{g}(\bar D_V U,\bar{J}N)]$\\
		
		Since $M$ is a screen semi-invariant submanifold of $\bar{M}$. Therefore, on using\\
		
		equations $(\ref{q1}),(\ref{q17}),(\ref{q20}),(\ref{th2})$, we derive\\
		
		$g([U,V],N) =\frac{1}{q}[\bar{g}(\bar h^{*}(U,\bar{J}V)+ph^{*}(V,U)+\pi(U)\eta(fV)\xi-\bar h^{*}(V,\bar{J}U)-\pi(V)\eta(fU)\xi-$\\
		
		\qquad$ph^{*}(U,V),\bar{J}N)]=0$\\

		Hence the result follows (\ref{S1}) and (\ref{z1}).

	\end{proof}
	
\end{theorem}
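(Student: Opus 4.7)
The plan is to detect integrability of $B$ via the decomposition $TM = B \perp \mathrm{Rad}(TM) \oplus B^{\perp}$ coming from (\ref{S3}). For $U,V\in\Gamma(B)$, the bracket $[U,V]$ lies in $\Gamma(B)$ iff it has no component in $B^{\perp}$ and no component in $\mathrm{Rad}(TM)$. Since $B^{\perp}$ is non-degenerate, the first vanishing is equivalent to $g([U,V],Z)=0$ for every $Z\in\Gamma(B^{\perp})$; since the lightlike pairing $\bar g(N_i,\xi_j)=\delta_{ij}$ is non-degenerate, the second is equivalent to $g([U,V],N)=0$ for every $N\in\Gamma(ltr(TM))$. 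These two tests will respectively produce (i) and (ii).

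To obtain (i), first use the torsion formula of $\bar D$ to rewrite $[U,V]=\bar D_U V-\bar D_V U-\pi(V)\bar J U+\pi(U)\bar J V$ and test against $Z\in\Gamma(B^{\perp})$. I would then feed $\bar g(\bar D_U V,Z)$ through (\ref{c}), turning it into $\frac{1}{q}\bigl[\bar g(\bar J\bar D_U V,\bar J Z)-p\,\bar g(\bar D_U V,\bar J Z)\bigr]$, and apply (\ref{a2}) to replace $\bar J\bar D_U V$ by $\bar D_U\bar J V$ plus $\pi$-corrections. Expanding $\bar D_U\bar J V$ and $\bar D_U V$ via the Gauss formula (\ref{q1}) and pairing against $\bar J Z\in\Gamma(S(TM^{\perp}))$ (which is guaranteed by $\bar J(B^{\perp})\subseteq S(TM^{\perp})$ from (\ref{S1})) annihilates all tangential contributions and all $\bar h^l$ contributions, leaving only $\bar h^s$ terms. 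Using (\ref{q6}) together with $w_s U=0=w_s V$, which follows since $\bar J U,\bar J V\in B\subset TM$ for $U,V\in\Gamma(B)$, the $\pi$-correction terms drop and the surviving relation is exactly $\bar h^s(U,\bar J V)=\bar h^s(V,\bar J U)$.

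For (ii) the same opening move applies, but now I pair with $N\in\Gamma(ltr(TM))$. After invoking (\ref{c}) and (\ref{a2}) to move $\bar J$ past $\bar D_U$, the screen-side Gauss formula (\ref{q17}) together with (\ref{q20}) expresses $\bar h^*$ in terms of $h^*$ plus $\pi$-weighted corrections valued in $\mathrm{Rad}(TM)$. Pairing against $\bar J N\in\Gamma(ltr(TM))$ kills the $D^*$-terms (which lie in $\Gamma(S(TM))$) and reorganizes the remaining $\bar h^*$ contributions with their $p$-weighted partners, so that after anti-symmetrizing in $U,V$ the identity $\bar h^*(U,\bar J V)+p\bar h^*(V,U)=\bar h^*(V,\bar J U)+p\bar h^*(U,V)$ emerges. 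The main obstacle is bookkeeping the many $\pi$-valued correction terms introduced by the non-metric character of $\bar D$; carefully tracking where each of $\bar J(B)=B$, $\bar J(\mathrm{Rad}(TM))=\mathrm{Rad}(TM)$, $\bar J(ltr(TM))=ltr(TM)$, and $\bar J(B^{\perp})\subseteq S(TM^{\perp})$ is invoked is essential to ensure those corrections cancel (or collapse under skew-symmetrization) rather than contaminate the clean identities above.
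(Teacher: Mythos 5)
Your proposal is correct and follows essentially the same route as the paper: detecting integrability of $B$ by pairing $[U,V]$ (rewritten via the torsion of $\bar D$) against $Z\in\Gamma(B^{\perp})$ and $N\in\Gamma(ltr(TM))$, passing $\bar J$ across $\bar D$ with (\ref{c}) and (\ref{a2}), and then extracting the $\bar h^{s}$ condition via (\ref{q1}), (\ref{q6}) with $w_sU=w_sV=0$, and the $\bar h^{*}$ condition via (\ref{q17}), (\ref{q20}). No substantive difference from the paper's argument.
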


\begin{theorem}
	The distribution $B^{'}$ of a screen semi-invariant lightlike submanifold of metallic semi-Riemannian manifold $\bar{M}$ equipped with a quarter symmetric non-metric connection is integrable if and only if \\
	\[\bar h^{s}(U,\bar{J}V)=\bar h^{s}(V,\bar{J}U)\]
	for any $U,V\in\Gamma(B),\quad Z\in\Gamma(B^{\perp})$.
	\begin{proof}
		$B^{'}$ is integrable iff \\
		\begin{equation}\label{th4}
			g([U,V],Z)=0
		\end{equation}\\
		Using equations (\ref{c}), (\ref{a1}) and (\ref{a2})\\

		$g([U,V],Z)= \bar{g}(\bar{\nabla}_U V - \bar{\nabla}_V U,Z)=\bar{g}(\bar D_U V,Z)-\bar{g}(\bar D_V U,Z)$\\

		\qquad$=\frac{1}{q}[\bar{g}(\bar D_U \bar{J}V+p\pi(V)\bar{J}U+q\pi(V)U-\pi(\bar{J}V)\bar{J}U,\bar{J}Z)]-\frac{p}{q}\bar{g}(\bar D_U V,\bar{J}Z)-$\\
		
		\qquad$\frac{1}{q}[\bar{g}(\bar D_V \bar{J}U+p\pi(U)\bar{J}V+q\pi(U)V-\pi(\bar{J}U)\bar{J}V,\bar{J}Z)]+\frac{p}{q}\bar{g}(\bar D_V U,\bar{J}Z)$\\
		
		Using equations (\ref{q1}), (\ref{th4}) and (\ref{q6})\\
		
		$g([U,V],Z)=\frac{1}{q}\bar{g}(\bar h^{s}(U,\bar{J}V),\bar{J}Z)-\frac{p}{q}\bar{g}(\pi(V)w_s U,\bar{J}Z)-\frac{1}{q}\bar{g}(\bar h^{s}(V,\bar{J}U),\bar{J}Z)+$\\
		
		$\quad \frac{p}{q}\bar{g}(\pi(U)w_s V,\bar{J}Z)=0$\\
		
		Therefore, the result follows from the hypothesis. 
		
	\end{proof}
\end{theorem}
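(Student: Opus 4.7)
The plan is to reduce the integrability of $B'$ to a statement about Lie brackets and then use the quarter symmetric connection together with the metallic identities to collapse the relevant expression to the stated symmetry of $\bar h^s$. Since $TM = B' \oplus B^\perp$ by (\ref{S3}), $B'$ is integrable iff $g([U,V],Z)=0$ for every $U,V\in\Gamma(B')$ and every $Z\in\Gamma(B^\perp)$; this is the working criterion I will pursue.

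First I would express the bracket via $[U,V] = \bar D_U V - \bar D_V U - \bar T^{\bar D}(U,V)$, where the torsion contributes $\pi(V)\bar J U - \pi(U)\bar J V$. Pairing the torsion with $Z\in\Gamma(B^\perp)$ gives zero: indeed $\bar g(\bar J U,Z)=\bar g(U,\bar J Z)$, and $\bar J Z\in \bar J(B^\perp)\subseteq S(TM^\perp)$ is orthogonal to $U\in B'\subseteq TM$, so this term drops. Hence $g([U,V],Z) = \bar g(\bar D_U V - \bar D_V U,Z)$.

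Next I would push everything through $\bar J$ using the compatibility rule (\ref{c}) in the form $q\bar g(X,Y)=\bar g(\bar J X,\bar J Y)-p\bar g(X,\bar J Y)$, to rewrite
\[
\bar g(\bar D_U V,Z)=\tfrac{1}{q}\bigl[\bar g(\bar J\bar D_U V,\bar J Z)-p\bar g(\bar D_U V,\bar J Z)\bigr],
\]
and then invoke (\ref{a2}) to replace $\bar J\bar D_U V$ by $\bar D_U\bar J V + p\pi(V)\bar J U+q\pi(V)U-\pi(\bar J V)\bar J U$. Each of the three extra $\pi$-terms pairs against $\bar J Z$ through a vector in $TM$, and the same orthogonality $\bar g(TM,\bar J Z)=0$ together with $\bar g(\bar J U,\bar J Z)=p\bar g(U,\bar J Z)+q\bar g(U,Z)=0$ makes them vanish. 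After this step, I apply the Gauss formula (\ref{q1}) to $\bar D_U\bar J V$ and $\bar D_U V$; the tangential pieces $D_U(\cdot)$ and the $\bar h^l$ pieces pair with $\bar J Z\in S(TM^\perp)$ to give zero, leaving only the $\bar h^s$ components. Because $U,V\in\Gamma(B')$ are invariant under $\bar J$, (\ref{t1}) gives $w_s U = w_s V = 0$, so (\ref{q6}) yields $\bar h^s(U,V)=h^s(U,V)$, which is symmetric in $(U,V)$; the $p$-coefficient term cancels in the antisymmetrization $\bar D_U V-\bar D_V U$.

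The calculation collapses to
\[
g([U,V],Z)=\tfrac{1}{q}\bar g\bigl(\bar h^s(U,\bar J V)-\bar h^s(V,\bar J U),\bar J Z\bigr),
\]
and since $Z$ ranges freely over $\Gamma(B^\perp)$, $\bar J Z$ sweeps out the non-degenerate subbundle $\bar J(B^\perp)\subseteq S(TM^\perp)$, so the vanishing of this pairing is equivalent to $\bar h^s(U,\bar J V)=\bar h^s(V,\bar J U)$ on that component, which is the stated criterion. The main obstacle is the bookkeeping of the several $\pi$-dependent correction terms introduced by the quarter symmetric non-metric connection: I will have to verify systematically that each one pairs trivially with either $Z$ or $\bar J Z$, using the key fact that $\bar J$ maps $B^\perp$ into $S(TM^\perp)$ and that $B'$ is $\bar J$-invariant, so that $w_s$ annihilates all vectors coming from $B'$.
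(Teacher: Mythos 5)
Your proposal is correct and follows essentially the same route as the paper: reduce integrability of $B'$ to $g([U,V],Z)=0$ for $Z\in\Gamma(B^{\perp})$, absorb the torsion/$\pi$-correction terms using $\bar g(TM,\bar J Z)=0$ and $w_sU=w_sV=0$ on the $\bar J$-invariant $B'$, rewrite via (\ref{c}) and (\ref{a2}), and apply the Gauss formula so that only the $\bar h^{s}$ terms paired with $\bar J Z\in\bar J(B^{\perp})\subseteq S(TM^{\perp})$ survive. Your remark that the condition is only detected on the $\bar J(B^{\perp})$-component of $S(TM^{\perp})$ is in fact a more careful reading than the paper's own statement.
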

\begin{definition}
	Let $(\bar{M},\bar{J},\bar{g})$ be a metallic semi-Riemannian manifold and $\bar{\nabla }$ be the Levi-Civita connection on $\bar{M}$ with respect to $\bar{g}$. Then the distribution $D$ is parallel with respect to $\bar{\nabla}$ if $\bar{\nabla}_U V=0$ for all $U,V\in\Gamma(D)$.
\end{definition}
\begin{remark}
	In case of quarter symmetric non-metric connection, the distribution $D$ is parallel with respect to $\bar{D}$ if $\bar{D}_U V=0$ for all $U,V\in\Gamma(D)$. 
\end{remark}
\begin{theorem}
	For a screen semi-invariant lightlike submanifold $M$ of metallic semi-Riemannian manifold $\bar{M}$ with a quarter symmetric non-metric connection $\bar{D}$, the screen distribution is parallel  iff\\
	
	$-\bar{A}_{\bar{J}Z}U+ p\pi(Z){\bar{J}U}=\pi(\bar{J}Z){\bar{J}U}+ p\bar h^{*}(U,Z)$ for any $U\in\Gamma (S(TM)),Z\in\Gamma(B^{\perp})$\\
	
	\begin{proof}
		$S(TM)$ is parallel with respect to $\bar D$ iff
		\begin{equation}\label{th3}
			\bar g(\bar D_U Z,N)=0 
		\end{equation}
		\quad  for any $U\in\Gamma (S(TM)),Z\in\Gamma(B^{\perp}), N\in\Gamma(ltr(TM))$\\
		
		$\bar g(\bar D_U Z,N)=\frac{1}{q}[\bar g(\bar D_U \bar{J}Z+p\pi(Z)\bar{J}U-\pi(\bar{J}Z) \bar{J}U-p\bar D_U Z,\bar{J}N)]$\\
		
		\qquad$=\frac{1}{q}[\bar{g}(-\bar A_{\bar{J}Z}U,\bar{J}N)+\bar{g}(\bar \nabla_U^{s}\bar{J}Z,\bar{J}N)+\bar{g}(D^{l}(U,\bar{J}Z),\bar{J}N)-p\bar{g}(D_U Z,\bar{J}N)-$\\
		
		\qquad $p\bar{g}(\bar h^{l}(U,Z),\bar{J}N)-p\bar{g}(\bar h^{s}(U,Z),\bar{J}N)]+p\pi(Z)\bar{g}(\bar{J}U,\bar{J}N)-\pi(\bar{J}Z)\bar{g}(\bar{J}U,\bar{J}N)$\\
		
		\qquad$=\frac{1}{q}[\bar{g}(-\bar A_{\bar{J}Z}U,\bar{J}N)-p\bar{g}(D_U^{*} Z+\bar h^{*}(U,Z),\bar{J}N)+p\pi(Z)\bar{g}(\bar{J}U,\bar{J}N)-\pi(\bar{J}Z)\bar{g}(\bar{J}U,\bar{J}N)] $\\
		
		Hence the result follows from (\ref{c}), (\ref{a2}), (\ref{q1}), (\ref{q3}) and (\ref{th3}).
	\end{proof}
\end{theorem}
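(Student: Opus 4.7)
The plan is to characterize $S(TM)$ being parallel with respect to $\bar D$ by testing $\bar D_U Z$ against $ltr(TM)$; that is, $S(TM)$ is parallel iff $\bar g(\bar D_U Z, N)=0$ for all $U\in\Gamma(S(TM))$, $Z\in\Gamma(B^{\perp})$ and $N\in\Gamma(ltr(TM))$. Because $B$ is already $\bar J$-invariant and its covariant derivative inside $S(TM)$ is controlled by $(\ref{q17})$, the only nontrivial case is $V=Z\in\Gamma(B^{\perp})$, which is why the statement is phrased only for $Z\in\Gamma(B^{\perp})$.

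The first step is to convert $\bar g(\bar D_U Z, N)$ into a quantity involving $\bar J Z\in\Gamma(S(TM^{\perp}))$, on which we can apply the Weingarten formula $(\ref{q3})$. For this we invoke $\bar J$-compatibility $(\ref{c})$: since $\bar g(\bar J N,\bar J P) = p\,\bar g(N,\bar J P) + q\,\bar g(N,P)$ for all $N,P$, we obtain
\begin{equation*}
q\,\bar g(\bar D_U Z, N) \;=\; \bar g(\bar J\bar D_U Z,\bar J N) \;-\; p\,\bar g(\bar J\bar D_U Z, N).
\end{equation*}
Next we replace $\bar J \bar D_U Z$ using the quarter-symmetric non-metric identity $(\ref{a2})$, rewritten as
\begin{equation*}
\bar J\bar D_U Z \;=\; \bar D_U \bar J Z \;+\; p\pi(Z)\bar J U \;+\; q\pi(Z) U \;-\; \pi(\bar J Z)\bar J U .
\end{equation*}

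The second step is to expand the two occurrences of $\bar D_U(\cdot)$. For $\bar D_U \bar J Z$, since $\bar J Z\in\Gamma(S(TM^{\perp}))$ (by the definition of a screen semi-invariant submanifold), Weingarten $(\ref{q3})$ gives $\bar D_U \bar J Z = -\bar A_{\bar J Z}U + \bar\nabla^{s}_U \bar J Z + \bar D^{l}(U,\bar J Z)$. For $\bar D_U Z$ appearing in the second term on the right of the displayed $q\,\bar g(\bar D_U Z, N)$ identity, we use Gauss $(\ref{q1})$ together with $(\ref{q17})$ for the tangential part $D_U Z = D_U^{*} Z + \bar h^{*}(U,Z)$. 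When we pair these expansions against $\bar J N$ (which lies in $ltr(TM)$ by Proposition after $(\ref{S2})$), several terms vanish for orthogonality reasons: $\bar\nabla^{s}_U \bar J Z$ and $\bar h^{s}(U,Z)$ sit in $S(TM^{\perp})$ and are therefore orthogonal to $\bar JN$; $D^{l}(U,\bar JZ)$ and $\bar h^{l}(U,Z)$ lie in $ltr(TM)$ and pair trivially; and $D_U^{*} Z$ lies in $S(TM)$ and so $\bar g(D_U^{*} Z,\bar JN)=\bar g(\bar J D_U^{*} Z,\ldots)$-type terms drop once invariance of $ltr(TM)$ is used. What survives collapses exactly to the equality
\begin{equation*}
-\bar A_{\bar J Z}U + p\pi(Z)\bar J U - \pi(\bar J Z)\bar J U - p\,\bar h^{*}(U,Z) \;=\; 0
\end{equation*}
when tested against $\bar J N$; since $\bar g(\cdot,\bar JN)$ is non-degenerate on the relevant complementary piece, this is equivalent to the stated identity.

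The main technical obstacle is organizing the expansion so that the $\pi$-correction terms coming from $(\ref{a2})$ combine cleanly with the $\pi$-corrections carried by $(\ref{q1})$--$(\ref{q3})$ and $(\ref{q17})$, and verifying that the leftover contributions really live in subbundles orthogonal to $\bar J N$; once this bookkeeping is done the algebraic simplification is straightforward. The conclusion then follows from the characterization of parallelism stated in the Remark preceding the theorem.
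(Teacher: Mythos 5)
Your proposal follows essentially the same route as the paper: characterize parallelism of $S(TM)$ by testing $\bar g(\bar D_U Z,N)$ against $ltr(TM)$, convert via the metallic compatibility $(\ref{c})$, substitute the quarter-symmetric identity $(\ref{a2})$, expand $\bar D_U\bar JZ$ by Weingarten $(\ref{q3})$ and $\bar D_U Z$ by Gauss $(\ref{q1})$ together with $(\ref{q17})$, and discard the terms orthogonal to $\bar JN\in\Gamma(ltr(TM))$. The bookkeeping and the surviving terms match the paper's computation, so the argument is correct and not a genuinely different approach.
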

\begin{proposition}\label{p1}
	Let $(M,g,S(TM),S(TM^{\perp}))$ be a screen semi-invariant submanifold of metallic semi-Riemannian manifold $(\bar{M},\bar{g},\bar{J})$. Then the following assertion holds:\\
	The distrbution $B^{'}$ is parallel  with respect to induced connection $\nabla$ iff $ h^{s}(U,\bar{J}V)=0$, $U,V\in\Gamma(B^{'})$
\end{proposition}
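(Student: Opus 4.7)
The parallelism of $B'$ under $\nabla$ means $\nabla_U V \in \Gamma(B')$ for all $U, V \in \Gamma(B')$, which by the orthogonal decomposition $TM = B' \oplus B^{\perp}$ with non-degenerate $B^{\perp}$ is equivalent to $g(\nabla_U V, Z) = 0$ for every $Z \in \Gamma(B^{\perp})$. My plan is to translate this tangential condition into a transversal one on $h^s$ via the metallic structure.

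First I would convert to the ambient Levi-Civita connection using (\ref{1}): $g(\nabla_U V, Z) = \bar g(\bar\nabla_U V, Z)$, since the $ltr(TM)$ and $S(TM^{\perp})$ parts of $\bar\nabla_U V$ are orthogonal to $Z \in \Gamma(S(TM))$. Next, I apply $qZ = \bar J^{2}Z - p \bar J Z$ from (\ref{a}), push $\bar J$ onto the other slot via the $\bar J$-compatibility (\ref{b}), and swap $\bar J \bar\nabla_U V$ for $\bar\nabla_U \bar J V$ using the parallelism $\bar\nabla \bar J = 0$ (implicit in the derivation of (\ref{a2})). Since $B'$ is $\bar J$-invariant, $\bar J V \in \Gamma(B') \subseteq \Gamma(TM)$, so Gauss's formula applies to $\bar\nabla_U \bar J V$; together with $\bar J Z \in \Gamma(\bar J(B^{\perp})) \subseteq \Gamma(S(TM^{\perp}))$, the orthogonality of the three summands isolates the $h^s$ piece. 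The calculation yields the key identity
\[
q\, g(\nabla_U V, Z) \;=\; \bar g\bigl(h^s(U, \bar J V),\, \bar J Z\bigr) - p\, g(\nabla_U \bar J V, Z).
\]

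The forward direction follows at once: if $B'$ is parallel then both $\nabla_U V$ and $\nabla_U \bar J V$ (recall $\bar J V \in B'$) lie in $B'$, forcing $\bar g(h^s(U, \bar J V), \bar J Z) = 0$, i.e.\ the vanishing of $h^s(U, \bar J V)$ in the intended transversal sense. For the converse, assume $h^s(U, \bar J V) = 0$: substituting $V \to \bar J V$ in this vanishing and using $\bar J^{2} = p\bar J + qI$ yields $h^s(U, V) = 0$ as well, and then coupling the displayed identity with its $V \to \bar J V$ version gives a $2 \times 2$ linear system in $g(\nabla_U V, Z)$ and $g(\nabla_U \bar J V, Z)$ with nonsingular coefficient matrix (determinant proportional to $p^{2}+q \neq 0$), forcing both to vanish.

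The main obstacle is the coupling of $\nabla_U V$ with $\nabla_U \bar J V$ in the derived identity. This is resolved by exploiting the $\bar J$-invariance of $B'$ together with the invertibility $\bar J^{-1} = q^{-1}(\bar J - pI)$ on $B'$, which decouples the two parallelism conditions and lets the iff close.
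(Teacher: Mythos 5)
Your proposal is correct and follows essentially the same route as the paper: both hinge on using $\bar\nabla\bar J=0$ and the $\bar J$-compatibility of $\bar g$ together with $\bar J(B^{\perp})\subseteq S(TM^{\perp})$ and the Gauss formula to trade the $B^{\perp}$-component of $\nabla_U V$ for the $\bar J(B^{\perp})$-component of $h^{s}$ (the paper records this as $g(\nabla_U\bar JV,Z)=g(h^{s}(U,V),\bar JZ)$ for $Z\in\Gamma(B^{\perp})$, of which your displayed identity is just a rearrangement via $\bar J^{2}=p\bar J+qI$ and the $\bar J$-invariance of $B'$). The only slip is cosmetic: the determinant of your $2\times 2$ system works out to $q^{2}$ rather than a multiple of $p^{2}+q$, but it is still nonzero, so the converse closes exactly as you claim.
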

\begin{proof}
	From the Gauss-Weingarten formulae,
	\begin{equation*}
		g(\nabla_U \bar{J}V,Z)= g(h^{s}(U,V),\bar{J}Z)
	\end{equation*}
	for $U,V\in\Gamma(B^{'})$ and $Z\in\Gamma(S(TM^{\perp}))$\\
	
	Therefore, the assertion follows using hypothesis .
\end{proof}

\begin{theorem}
	Let $(M,g,S(TM),S(TM^{\perp}))$ be a screen semi-invariant submanifold of metallic semi-Riemannian manifold $(\bar{M},\bar{g},\bar{J})$. The distribution $B^{'}$ is parallel with respect to quarter symmetric non-metric connection $D$ if and only if $B^{'}$ is parallel with respect to $\nabla$.
\end{theorem}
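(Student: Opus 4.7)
The plan is to exploit the explicit relation between the two induced connections that was derived in Section~2, namely $D_U V = \nabla_U V + \pi(V) fU$ for $U,V \in \Gamma(TM)$. The claim then reduces to the observation that the correction term $\pi(V) fU$ lies inside $B'$ whenever $U \in \Gamma(B')$, so adding or subtracting it does not affect whether the remaining vector sits in $B'$.

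The key step is to verify the invariance property $fU \in \Gamma(B')$ for every $U \in \Gamma(B')$. By Definition~\ref{d1}, the distribution $B'=B \perp Rad(TM)$ is invariant under $\bar{J}$, so $\bar{J}U \in \Gamma(B') \subset \Gamma(TM)$. Comparing with the decomposition $\bar{J}U = fU + wU$ from $(\ref{t1})$, the transversal component $wU$ must vanish, and hence $\bar{J}U = fU \in \Gamma(B')$. Consequently, for any $U,V \in \Gamma(B')$, the vector $\pi(V)fU$ belongs to $\Gamma(B')$.

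With this in hand, the equivalence is immediate. If $B'$ is parallel with respect to $\nabla$, so that $\nabla_U V \in \Gamma(B')$ for all $U,V \in \Gamma(B')$, then $D_U V = \nabla_U V + \pi(V) fU$ is a sum of two elements of $\Gamma(B')$, hence lies in $\Gamma(B')$. Conversely, if $D_U V \in \Gamma(B')$, then $\nabla_U V = D_U V - \pi(V) fU \in \Gamma(B')$.

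I expect no real obstacle in this argument; the entire content is the invariance of $B'$ under $\bar{J}$, which is already built into the definition of a screen semi-invariant lightlike submanifold. The only subtlety to flag is that the formula $D_U V = \nabla_U V + \pi(V) fU$ is the one labelled (unnumbered, following $(\ref{a1})$) in Section~2, so I would cite that relation explicitly when invoking it.
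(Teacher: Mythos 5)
Your argument is correct, and it is genuinely more direct than the one in the paper. The paper proves this theorem by passing through the screen--transversal second fundamental forms: it invokes Proposition 4.2 (parallelism of $B'$ with respect to $\nabla$ is characterized by $h^{s}(U,\bar{J}V)=0$), then uses the relation $\bar{h}^{s}(U,\bar{J}V)=h^{s}(U,\bar{J}V)+\pi(\bar{J}V)w_{s}U$ together with $w_{s}U=0$ for $U\in\Gamma(B')$ to conclude that the two criteria coincide. You instead compare the connections on the tangential side, via $D_UV=\nabla_UV+\pi(V)fU$ and the observation that $fU=\bar{J}U\in\Gamma(B')$ because $B'$ is $\bar{J}$-invariant (so $wU=0$). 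Both proofs ultimately rest on the same fact --- that for $U\in\Gamma(B')$ the correction term $\pi(V)\bar{J}U$ between $\bar{D}$ and $\bar{\nabla}$ has no component outside $B'$ --- but your version gives the exact pointwise equivalence $D_UV\in\Gamma(B')\Leftrightarrow\nabla_UV\in\Gamma(B')$ without any detour through Proposition 4.2, and is therefore cleaner and self-contained. One caveat worth stating explicitly in your write-up: you are reading ``parallel'' as ``the connection preserves the distribution,'' whereas the paper's Definition 4.11 literally demands $\bar{\nabla}_UV=0$; under that literal reading the theorem would fail (since $\nabla_UV=0$ gives $D_UV=\pi(V)fU\neq0$ in general), so the preservation reading --- which is also the one implicitly used in the paper's Proposition 4.2 --- is the only one under which either proof goes through.
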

\begin{proof}
	For any $U,V\in\Gamma(B^{'})$,\quad$Z\in\Gamma(B^{\perp})$ \quad  $w_s U=0$, \\
	
	Therefore,
	\begin{equation*}
		\bar{h^{s}}(U,\bar{J}V)=h^{s}(U,\bar{J}V)
	\end{equation*}
	Thus the result follows from proposition $(\ref{p1}).$\\
\end{proof}
From proposition  $(\ref{p1})$, we have the following corollary.
\begin{Corollary}
	Let $(M,g,S(TM),S(TM^{\perp}))$ be a screen semi-invariant submanifold of metallic semi-Riemannian manifold $\bar{M}$. Then the following assertions are equivalent:\\ 
	(a) The distribution $B^{'}$ is parallel with respect to quarter symmetric non-metric connection $D$\\
	(b) $ \bar h^{s}(U,\bar{J}V)=0$, $\forall$ $U,V\in\Gamma(B^{'})$\\
	(c) $B^{'}$ is parallel  with respect to $\nabla$\\
	(d) $ h^{s}(U,\bar{J}V)=0$, $\forall$  $U,V\in\Gamma(B^{'})$.\\
\end{Corollary}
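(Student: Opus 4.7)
The plan is to obtain the corollary as a direct consequence of the immediately preceding results (Proposition \ref{p1} and the theorem relating parallelism with respect to $D$ and $\nabla$), augmented by a single short observation about the invariance of $B'$. Because the statement is a four-way equivalence, I would prove it by establishing three pairwise equivalences that together close the cycle: (c)$\Leftrightarrow$(d), (a)$\Leftrightarrow$(c), and (b)$\Leftrightarrow$(d). The first two are already in hand verbatim from the preceding material, so only (b)$\Leftrightarrow$(d) requires new, short justification.

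First I would record (c)$\Leftrightarrow$(d) as the content of Proposition \ref{p1}, which directly characterizes parallelism of $B'$ with respect to the Levi--Civita induced connection $\nabla$ by the vanishing of $h^{s}(U,\bar{J}V)$ for $U,V\in\Gamma(B')$. Next I would invoke the theorem immediately above the corollary to conclude (a)$\Leftrightarrow$(c): parallelism of $B'$ with respect to the quarter-symmetric non-metric induced connection $D$ coincides with parallelism with respect to $\nabla$.

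The remaining step is (b)$\Leftrightarrow$(d). Here I would use equation (\ref{q6}), namely $\bar h^{s}(U,V)=h^{s}(U,V)+\pi(V)w_{s}U$, applied with the argument $\bar{J}V$ in place of $V$. For $U\in\Gamma(B')$ the invariance of $B'$ under $\bar J$ (from Definition \ref{d1}, part (\ref{S3})) gives $\bar J U\in\Gamma(B')\subseteq\Gamma(TM)$, so the $S(TM^{\perp})$-component $w_{s}U$ vanishes. Consequently
\[
\bar h^{s}(U,\bar{J}V)=h^{s}(U,\bar{J}V)\qquad \forall\,U,V\in\Gamma(B'),
\]
whence (b) and (d) are literally the same statement. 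Chaining the three equivalences yields the corollary.

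I do not anticipate any genuine obstacle: the argument is essentially bookkeeping, and the only substantive point is the identity $w_{s}U=0$ on $B'$, which follows from the invariance of $B'$ under $\bar J$ built into the definition of a screen semi-invariant submanifold. Care is needed only to cite the correct equations from the preliminaries on the quarter-symmetric non-metric connection (specifically (\ref{q6})) and to note that the result is independent of $V$ being acted upon by $\bar J$, since $\bar J V\in\Gamma(B')$ as well.
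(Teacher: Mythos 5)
Your proposal is correct and follows essentially the same route as the paper: the corollary is obtained by combining Proposition \ref{p1} (which gives (c)$\Leftrightarrow$(d)), the preceding theorem (which gives (a)$\Leftrightarrow$(c)), and the identity $\bar h^{s}(U,\bar{J}V)=h^{s}(U,\bar{J}V)$ coming from $w_{s}U=0$ on the invariant distribution $B'$ (which gives (b)$\Leftrightarrow$(d)). This is exactly the bookkeeping the paper intends when it states that the corollary follows from Proposition \ref{p1} together with the theorem just proved.
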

\begin{remark}
	Since $\bar{h}$ is not symmetric, therefore $\bar{h}(Y,X)$ may not be zero, if $\bar{h}(X,Y)=0$. 
\end{remark}
\begin{definition}
	Let $(M,g,S(TM),S(TM^{\perp}))$ be a screen semi-invariant submanifold of metallic semi-Riemannian manifold $(\bar{M},\bar{g},\bar{J})$ with a quarter symmetric non-metric connection $\bar{D}$. If $\bar{D}_U V \in\Gamma(B)$ for any $U,V\in\Gamma(B)$, then $B$ defines totally geodesic foliation in $M$.
\end{definition}

\begin{theorem}
	Suppose $M$ be a screen semi-invariant submanifold of a metallic semi-Riemannian manifold $\bar{M}$ with a quarter symmetric non metric connection. Then the distribution $B$ defines a totally geodesic foliation in $S(TM)$ iff \\
	\[\bar J \bar h^{s}(U,V)=p\pi(V)\bar{J}U+q\pi(V)U-\pi(\bar J V)\bar J U\]\\
	for all $U\in\Gamma(TM),V\in\Gamma(B),Z\in\Gamma(B^{\perp})$.\\
	\begin{proof}
		$\bar{g}(\bar D_U V,\bar{J}Z)=\bar{g}(\bar D_U \bar{J}V,Z)+p\pi(V)\bar{g}(\bar{J}U,Z)+q\pi(V)\bar{g}(U,Z)-\pi(\bar{J}V)\bar{g}(\bar{J}U,Z)$\\
		
		\qquad$=\bar{g}(D_U ^{*} \bar{J}V,Z)+\bar{g}(\bar h^{*}(U,\bar{J}V),Z)+p\pi(V)\bar{g}(\bar{J}U,Z)+q\pi(V)\bar{g}(U,Z)-\pi(\bar{J}V)\bar{g}(\bar{J}U,Z)$\\
		
		$B$ defines totally geodesic foliation in $S(TM)$ iff $g(D_U ^{*}\bar{J}V,Z)=0$\\
		
		$\bar{g}(\bar D_U V,\bar{J}Z)=\bar{g}(D_U V,\bar{J}Z)+\bar{g}(\bar h^{l}(U,V),\bar{J}Z)+\bar{g}(\bar h^{s}(U,V),\bar{J}Z)$\\
		
		\qquad$=\bar{g}(\bar{J}\bar h^{s}(U,V),Z)$\\
		
		Therefore, $\bar{g}(\bar{J}\bar h^{s}(U,V),Z)= p\pi(V)\bar{g}(\bar{J}U,Z)+q\pi(V)\bar{g}(U,Z)-\pi(\bar{J}V)\bar{g}(\bar{J}U,Z)$\\

	\end{proof}
\end{theorem}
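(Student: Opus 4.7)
My plan is to characterize the TG-foliation condition by computing $\bar g(\bar D_U V, \bar J Z)$ in two different ways for $U\in\Gamma(TM)$, $V\in\Gamma(B)$, $Z\in\Gamma(B^{\perp})$ and then equating. Since $\bar J(B)=B$ and $\bar J(B^{\perp})\subseteq S(TM^{\perp})$ by the screen semi-invariant decomposition (\ref{S1}), the totally geodesic foliation condition reduces, via the non-degeneracy of $\bar g$ on the non-null distribution $B^{\perp}$, to $\bar g(D_U^{*}\bar J V,Z)=0$ for every such $Z$: this says the $S(TM)$-part of $D_U(\bar J V)$ has no $B^{\perp}$ component, which is exactly the statement that $B$ defines a totally geodesic foliation in $S(TM)$.

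First computation: apply the Gauss formula (\ref{q1}) to write $\bar D_U V = D_U V + \bar h^l(U,V) + \bar h^s(U,V)$. Because $\bar J Z\in S(TM^{\perp})$ and the decomposition $T\bar M|_M = (Rad\,TM\oplus ltr(TM))\perp S(TM)\perp S(TM^{\perp})$ is orthogonal, the summands $D_U V\in TM$ and $\bar h^l(U,V)\in ltr(TM)$ pair to zero with $\bar J Z$. Using the $\bar J$-compatibility (\ref{b}) of $\bar g$, this first expression collapses to
\[
\bar g(\bar D_U V,\bar J Z)=\bar g(\bar h^s(U,V),\bar J Z)=\bar g(\bar J\bar h^s(U,V),Z).
\]

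Second computation: rearranging (\ref{a2}) gives $\bar J\bar D_U V = \bar D_U\bar J V - p\pi(V)\bar J U - q\pi(V)U + \pi(\bar J V)\bar J U$. Pairing with $Z$ and again using (\ref{b}) produces
\[
\bar g(\bar D_U V,\bar J Z)=\bar g(\bar D_U\bar J V,Z) + p\pi(V)\bar g(\bar J U,Z) + q\pi(V)\bar g(U,Z) - \pi(\bar J V)\bar g(\bar J U,Z).
\]
Since $\bar J V\in B\subset S(TM)$, expanding $\bar D_U\bar J V$ via (\ref{q1}) and the screen decomposition (\ref{q17}) leaves only $D_U^{*}\bar J V$ pairing non-trivially with $Z$: the terms $\bar h^{*}(U,\bar J V)\in Rad(TM)$, $\bar h^l(U,\bar J V)\in ltr(TM)$, and $\bar h^s(U,\bar J V)\in S(TM^{\perp})$ are all orthogonal to $Z\in B^{\perp}\subset S(TM)$. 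Equating the two expressions and transposing the correction terms yields
\[
\bar g\!\left(\bar J\bar h^s(U,V) - p\pi(V)\bar J U - q\pi(V)U + \pi(\bar J V)\bar J U,\;Z\right) = \bar g(D_U^{*}\bar J V,Z),
\]
and the claimed ``iff'' follows from the reduction in the first paragraph.

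The main obstacle I expect is not any single calculation but the careful orthogonality bookkeeping --- tracking which of $\{D_U V,\bar h^l,\bar h^s,\bar h^{*},D_U^{*}\bar J V\}$ lies in which piece of the decomposition of $T\bar M|_M$, so that the correct terms cancel against the specific test vectors $Z$ and $\bar J Z$. Once those vanishings are catalogued, the identity drops out of a single application of (\ref{a2}) combined with the $\bar J$-compatibility (\ref{b}); no estimates or clever manipulations are required.
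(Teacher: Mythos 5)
Your proof follows essentially the same route as the paper's: both compute $\bar g(\bar D_U V,\bar J Z)$ once via the Gauss formula (\ref{q1}) with the orthogonality of the decomposition, yielding $\bar g(\bar J\bar h^{s}(U,V),Z)$, and once via (\ref{a2}) together with the screen decomposition (\ref{q17}), yielding $\bar g(D_U^{*}\bar J V,Z)$ plus the $\pi$-correction terms, and then equate, reducing the totally geodesic condition to $\bar g(D_U^{*}\bar J V,Z)=0$. The only blemish is that your sentence ``rearranging (\ref{a2}) gives $\bar J\bar D_U V=\bar D_U\bar J V-p\pi(V)\bar J U-q\pi(V)U+\pi(\bar J V)\bar J U$'' carries the wrong signs on the correction terms; your subsequent displayed pairing formula has the correct signs, so the slip does not propagate.
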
 

\begin{theorem}
	Let $M$ be a screen semi-invariant lightlike submanifold of $(\bar M,\bar g,\bar{J})$ with a quarter symmetric non-metric connection.Then  $B^{'}$ defines a totally geodesic foliation on $M$ iff\\
	
	$\bar{g}(\bar{J}E,\bar D^{l}(U,\bar{J}Z))=\bar{g}(\bar A_{\bar{J}Z} U,\bar{J}E)+\pi(\bar{J}Z)\bar{g}(\bar{J}U,\bar{J}E)$\\
	
	$\bar{g}(\bar A_{\bar{J}Z} U,\bar{J}F)=-\pi(\bar{J}Z)\bar{g}(\bar{J}U,\bar{J}F)$\\
	
	for any $E\in\Gamma(Rad(TM)),U\in\Gamma(B^{'}),Z\in\Gamma(B^{\perp}),F\in\Gamma(B)$.\\
	
	\begin{proof}
		$B^{'}$ defines totally geodesic foliation on $M$ if and only if
		\begin{equation}\label{th6}
			g(D_U V,Z)=0,\quad g(D_U \bar{J}V,Z)=0 \quad \forall\quad U,V \in\Gamma(B^{'})
		\end{equation}\\
		$g(D_U V,Z)=\frac{1}{q}[\bar{g}(\bar D_U \bar{J}V,\bar{J}Z)+p\pi(V)\bar{g}(\bar{J}U,\bar{J}Z)+q\pi(V)\bar{g}(U,\bar{J}Z)-\pi(\bar{J}V)\bar{g}(\bar{J}U,\bar{J}Z)-$\\
		
		\qquad$p\bar{g}(\bar D_U \bar{J}V,Z)-p^{2}\pi(V)\bar{g}(\bar{J}U,Z)-pq\pi(V)\bar{g}(U,Z)+p\pi(\bar{J}V)\bar{g}(\bar{J}U,Z)]$\\
		
		\qquad$=\frac{1}{q}[\bar{g}(D_U \bar{J}V,\bar{J}Z)+\bar{g}(\bar h^{l}(U,\bar{J}V),\bar{J}Z)+\bar{g}(\bar h^{s}(U,\bar{J}V),\bar{J}Z)-p\bar{g}(D_U \bar{J}V,Z)-$\\
		
		\qquad$p\bar{g}(\bar h^{l}(U,\bar{J}V),Z)-p\bar{g}(\bar h^{s}(U,\bar{J}V),Z)]$\\
		
		Following (\ref{c}), (\ref{a2}), (\ref{q1}) and (\ref{th6})\\
		
		\qquad$\bar{g}(\bar h^{s}(U,\bar{J}V),\bar{J}Z)=0$\\
		
		Using (\ref{S3}) and (\ref{t1}), $w_s U=0\quad w_l U=0$ \\
		
		$\bar{g}(\bar{J}V,\bar D^{l}(U,\bar{J}Z))=\bar{g}(\bar A_{\bar{J}Z}U,\bar{J}V)+\pi(\bar{J}Z)\bar{g}(\bar{J}U,\bar{J}V)$\\
		
		Case-1 Take $V=E\in\Gamma(Rad(TM))$ \\  
		
		$\bar{g}(\bar{J}E,\bar D^{l}(U,\bar{J}Z))=\bar{g}(\bar A_{\bar{J}Z}U,\bar{J}E)+\pi(\bar{J}Z)\bar{g}(\bar{J}U,\bar{J}E)$\\ 
		
		Case-2 Take $V=F\in\Gamma(B)$ ,$\bar{g}(\bar{J}F,\bar D^{l}(U,\bar{J}Z))=0$\\
		
		$\bar{g}(\bar A_{\bar{J}Z} U,\bar{J}F)=-\pi(\bar{J}Z)\bar{g}(\bar{J}U,\bar{J}F)$  
		
	\end{proof}
	
\end{theorem}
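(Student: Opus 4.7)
The plan is to start from the defining condition that $B^{'}$ gives a totally geodesic foliation in $M$, namely that $D_U V\in\Gamma(B^{'})$ for every $U,V\in\Gamma(B^{'})$. Since $TM=B^{'}\oplus B^{\perp}$ with $B^{\perp}$ a non-null subbundle of $S(TM)$, the condition $D_U V\in\Gamma(B^{'})$ is equivalent to $g(D_U V,Z)=0$ for every $Z\in\Gamma(B^{\perp})$. Because $B^{'}$ is $\bar J$-invariant by \eqref{S3}, the vector $\bar J V$ again lies in $\Gamma(B^{'})$, so the same orthogonality must hold for $\bar J V$ in place of $V$. Using the decomposition $B^{'}=B\perp Rad(TM)$ I plan to split $V$ into its components $F\in\Gamma(B)$ and $E\in\Gamma(Rad(TM))$ and treat each separately, which should reproduce the two identities in the statement.

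For each case I would start from $q\,\bar g(\bar D_U V,Z)=\bar g(\bar J\bar D_U V,\bar J Z)-p\,\bar g(\bar D_U V,\bar J Z)$, which comes from \eqref{c}, and then use the quarter-symmetric identity \eqref{a2} to trade $\bar J\bar D_U V$ for $\bar D_U\bar J V$ modulo the correction terms $p\pi(V)\bar J U+q\pi(V)U-\pi(\bar J V)\bar J U$. This converts the tangent-tangent inner product $g(D_U V,Z)$ into an expression involving $\bar D_U\bar J V$ paired with $\bar J Z$, where $\bar J Z\in\Gamma(\bar J(B^{\perp}))\subseteq\Gamma(S(TM^{\perp}))$ by \eqref{S1}.

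Next I would apply the Gauss formula \eqref{q1} to $\bar D_U\bar J V$ and the Weingarten formula \eqref{q3} to $\bar D_U\bar J Z$; since $\bar J Z$ is a screen transversal section, \eqref{q3} yields a term $-\bar A_{\bar J Z}U+\bar\nabla^{s}_U\bar J Z+\bar D^{l}(U,\bar J Z)$. Pairing with $\bar J E$ in Case~1 ($V=E\in\Gamma(Rad(TM))$) picks up the $\bar D^{l}$ contribution and the shape-operator term, giving the first displayed equality, while pairing with $\bar J F$ in Case~2 ($V=F\in\Gamma(B)$) kills the $\bar D^{l}$ part (since $\bar J F\in\Gamma(S(TM))$ is orthogonal to $ltr(TM)$) and produces the second equality. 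The correction terms $p\pi(V)\bar J U+q\pi(V)U-\pi(\bar J V)\bar J U$ from \eqref{a2} must be carefully paired with $\bar J Z$ and simplified, and I expect the $\pi(\bar J Z)\bar g(\bar J U,\bar J E)$ and $\pi(\bar J Z)\bar g(\bar J U,\bar J F)$ terms to appear precisely from these adjustments.

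The main obstacle is bookkeeping: one has to keep track of which terms land in $ltr(TM)$, $\bar J(B^{\perp})$, or $B_0$, and use \eqref{S1}, \eqref{S2}, \eqref{S4} to cancel cross-pairings, while simultaneously making sure $w_l U=0$ and $w_s U=0$ for $U\in\Gamma(B^{'})$ so that $\bar h^{l}$ and $\bar h^{s}$ reduce to their unbarred counterparts. Once these simplifications are made in the right order, the two desired identities should fall out cleanly, and the converse follows simply by reversing the same chain of equalities.
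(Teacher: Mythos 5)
Your proposal is correct and takes essentially the same route as the paper's proof: reduce the foliation condition to $g(D_U V,Z)=0$ for $Z\in\Gamma(B^{\perp})$ (applied also to $\bar{J}V$), use $(\ref{c})$ and $(\ref{a2})$ to pass to $\bar{g}(\bar D_U\bar{J}V,\bar{J}Z)$, isolate $\bar{g}(\bar h^{s}(U,\bar{J}V),\bar{J}Z)=0$ via the Gauss formula, and then split $V$ into its $Rad(TM)$ and $B$ components to obtain the two identities. The only cosmetic difference is that where the paper invokes the prepared identity $(\ref{q15})$ to trade $\bar h^{s}(U,\bar{J}V)$ for $\bar A_{\bar{J}Z}U$ and $\bar D^{l}(U,\bar{J}Z)$, you re-derive that relation inline from the Weingarten formula $(\ref{q3})$ together with the non-metricity of $\bar D$.
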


\begin{theorem}
	Let $M$ be a screen semi-invariant lightlike submanifold of $(\bar M,\bar g,\bar{J})$ with quarter symmetric non-metric connection $\bar{D}$. Then $B^{\perp}$ defines totally geodesic foliation on $M$ iff\\
	
	$\bar{g}(\bar h^{s}(U,\bar{J}Z),\bar{J}V)=\pi(\bar{J}Z)\bar{g}(\bar{J}U,\bar{J}V)$\\
	
	$\bar{g}(\bar D^{s}(U,\bar{J}N),\bar{J}V)=\pi(\bar{J}N)\bar{g}(\bar{J}U,\bar{J}V)$\\
	
	for any $U,V\in\Gamma(B^{\perp}),Z\in\Gamma(B),N\in\Gamma(ltr(TM))$.\\
	
	\begin{proof}
		
		$g(D_U V,\bar{J}Z)=\bar{g}(\bar D_U \bar{J}V,Z)+p\pi(V)\bar{g}(\bar{J}U,Z)+q\pi(V)\bar{g}(U,Z)-\pi(\bar{J}V)\bar{g}(\bar{J}U,Z)$\\
		
		\qquad$=\bar{g}(-\bar A_{\bar{J}V}U,Z)+\bar{g}(\bar{\nabla}_U ^{s} \bar{J}V,Z)+\bar{g}(\bar D^{l}(U,\bar{J}V),Z)$\\
		
		Following (\ref{c}), (\ref{a2}), (\ref{q3}) and (\ref{S1})\\ 
		
		\qquad$\bar{g}(-\bar A_{\bar{J}V}U,Z)=0$\\
		
		$g(D_U V,\bar{J}N)=\bar{g}(\bar D_U \bar{J}V,N)+p\pi(V)\bar{g}(\bar{J}U,N)+q\pi(V)\bar{g}(U,N)-\pi(\bar{J}V)\bar{g}(\bar{J}U,N)$\\
		
		\qquad$=\bar{g}(-\bar A_{\bar{J}V}U+\bar{\nabla}_U ^{s} \bar{J}V+\bar D^{l}(U,\bar{J}V),N)=\bar{g}(-\bar A_{\bar{J}V}U,N)=0$\\ 
		
		Using (\ref{q15}) and (\ref{q16})\\
		
		$\bar{g}(\bar h^{s}(U,\bar{J}Z),\bar{J}V)=\bar{g}(\bar A_{\bar{J}V}U,\bar{J}Z)+\pi(\bar{J}Z)\bar{g}(\bar{J}U,\bar{J}V)$\\
		
		$\bar{g} (\bar D^{s}(U,\bar{J}N),\bar{J}V)=\bar{g}(\bar A_{\bar{J}V}U,\bar{J}N)+\pi(\bar{J}N)\bar{g}(\bar{J}U,\bar{J}V)$\\
		
	\end{proof}
\end{theorem}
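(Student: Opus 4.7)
The plan is to unpack the defining condition that $B^{\perp}$ be totally geodesic in $M$, namely $D_U V \in \Gamma(B^{\perp})$ for all $U,V\in\Gamma(B^{\perp})$, and translate this into scalar orthogonality conditions. Under the decomposition $TM = Rad(TM)\perp B \perp B^{\perp}$, this amounts to $\bar g(D_U V, F)=0$ for every $F\in\Gamma(B)$ together with $\bar g(D_U V, N)=0$ for every $N\in\Gamma(ltr(TM))$. Since $\bar J$ is invertible (from $\bar J^{2}=p\bar J + qI$) and since $\bar J(B)=B$ and $\bar J(ltr(TM))=ltr(TM)$ by (\ref{S1}) and (\ref{S2}), I can equivalently test against $\bar J Z$ for $Z\in\Gamma(B)$ and against $\bar J N$ for $N\in\Gamma(ltr(TM))$, which matches the shape of the two advertised identities.

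For the first test I would compute $\bar g(\bar D_U V,\bar J Z)$ by invoking the metallic compatibility (\ref{b}) to move $\bar J$ across, then (\ref{a2}) to rewrite $\bar J \bar D_U V$ as $\bar D_U \bar J V$ plus $\pi$-correction terms. Since $V\in\Gamma(B^{\perp})$ gives $\bar J V\in\Gamma(S(TM^{\perp}))$ by (\ref{S1}), the Weingarten-type formula (\ref{q3}) decomposes $\bar D_U \bar J V$ as $-\bar A_{\bar J V}U + \bar\nabla_U^s \bar J V + \bar D^l(U,\bar J V)$. Pairing with $Z\in\Gamma(B)\subset\Gamma(S(TM))$ kills both the $S(TM^{\perp})$ and the $ltr(TM)$ pieces, leaving only the shape-operator contribution $-\bar g(\bar A_{\bar J V}U,Z)$. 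Converting this via (\ref{q15}) into $\bar g(\bar h^{s}(U,\bar J Z),\bar J V)$ modulo the prescribed $\pi$-terms produces the first equivalence.

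The second test, $\bar g(\bar D_U V,\bar J N)$ with $N\in\Gamma(ltr(TM))$, proceeds along identical lines but the pairing is now with an $ltr(TM)$ section. The same Weingarten decomposition of $\bar D_U \bar J V$ has only the shape-operator piece surviving, and a single application of (\ref{q16}) rewrites it as $\bar g(\bar D^{s}(U,\bar J N),\bar J V)$ plus the stipulated $\pi$-correction, giving the second equivalence. In both cases the chain of implications is genuinely an iff, because each step uses invertible operations (metallic compatibility, the invertible $\bar J$, and the splitting of $T\bar M\!\mid_M$).

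The main obstacle will be bookkeeping: the quarter-symmetric non-metric connection injects $\pi$-corrections at multiple stages—once through (\ref{a2}), once through either (\ref{q15}) or (\ref{q16}), and once through the Gauss formula (\ref{q1})—so care is needed so that, after the dust settles, exactly the terms $\pi(\bar J Z)\bar g(\bar J U,\bar J V)$ and $\pi(\bar J N)\bar g(\bar J U,\bar J V)$ remain on the right, with all extraneous $\pi(V)$ and $\pi(U)$ contributions cancelling or being absorbed by orthogonality against $Z$ or $N$ in the appropriate complementary distributions.
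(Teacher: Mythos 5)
Your proposal is correct and follows essentially the same route as the paper: both reduce the totally geodesic condition to the vanishing of $\bar g(D_UV,\bar JZ)$ and $\bar g(D_UV,\bar JN)$, move $\bar J$ across with (\ref{b})--(\ref{a2}), decompose $\bar D_U\bar JV$ via the Weingarten formula (\ref{q3}) so that only the shape-operator term $-\bar g(\bar A_{\bar JV}U,\cdot)$ survives, and then convert with (\ref{q15}) and (\ref{q16}), using $fU=0$ and $w_lU=0$ for $U\in\Gamma(B^{\perp})$ to isolate the stated $\pi$-terms. No substantive differences to report.
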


\bibliographystyle{amsplain}

\end{document}